\newcommand{\KK}{\mathbb{K}}
\newcommand{\cL}{\mathcal{L}}
\newcommand{\fS}{\mathfrak{S}}
\newcommand{\Gr}{\mathrm{Gr}}
\newcommand{\cP}{\mathcal{P}}
\newcommand{\LL}{\mathbb{L}}
\newcommand{\PG}{\mathrm{PG}}
\newcommand{\FF}{\mathbb{F}}
\newcommand{\gr}{\mathrm{gr}}
\newcommand{\er}{\mathrm{er}}
\theoremstyle{plain}
\newtheorem{lemma}{Lemma}[section]
\newtheorem{theorem}[lemma]{Theorem}
\newtheorem{corollary}[lemma]{Corollary}
\newtheorem{proposition}[lemma]{Proposition}
\theoremstyle{definition}
\newtheorem{definition}[lemma]{Definition}
\newtheorem{remark}[lemma]{Remark}
\begin{document}
\begin{frontmatter}

  \title{On the generation of some Lie-type geometries}
  \author[SI]{Ilaria Cardinali}
\ead{ilaria.cardinali@unisi.it}
\address[SI]{Department of Information Engineering and Mathematics, University of Siena,
Via Roma 56, I-53100, Siena, Italy}
\author[LG]{Luca Giuzzi\corref{cor1}}
\ead{luca.giuzzi@unibs.it}
\address[LG]{DICATAM - Section of Mathematics,
University of Brescia,
Via Branze 53, I-25123, Brescia, Italy}
\cortext[cor1]{Corresponding author. Tel. +39 030 3715739; Fax. +39 030 3615745}
\author[SI,r]{Antonio Pasini}
\ead{antonio.pasini@unisi.it}
\fntext[r]{(retired)}
\begin{abstract}
Let $X_n(\KK)$ be a building of Coxeter type $X_n = A_n$ or $X_n = D_n$ defined over a given division ring $\KK$ (a field when $X_n = D_n$). For a non-connected set $J$ of nodes of the diagram $X_n$, let $\Gamma(\KK) = \Gr_J(X_n(\KK))$ be the $J$-grassmannian of $X_n(\KK)$. We prove that $\Gamma(\KK)$ cannot be generated over any proper sub-division ring $\KK_0$ of $\KK$. As a consequence, the generating rank of $\Gamma(\KK)$ is infinite when $\KK$ is not finitely generated. In particular, if $\KK$ is the algebraic closure of a finite field of prime order then the generating rank of $\Gr_{1,n}(A_n(\KK))$ is infinite, although its embedding rank is either $(n+1)^2-1$ or $(n+1)^2$.
\end{abstract}
\begin{keyword}
  Coxeter building \sep $A_n$ \sep $D_n$ \sep
  Grassmann geometry \sep Subgeometry \sep Generation
  \MSC[2010] 51E24 \sep 51M30 \sep 51M35 \sep 14M15
\end{keyword}
\end{frontmatter}
\section{Introduction}\label{Introduction}

We presume that the reader has some acquaintance with buildings and is familiar with basics of point-line geometry. In case, we refer to Tits \cite{Tits} for buildings and Shult \cite{SH11} for point-line geometries. We only recall the notion of generation in point-line geometries. A {\em subspace} of a point-line geometry $\Gamma = (\cP, \cL)$ is a subset $S$ of the point-set $\cP$ of $\Gamma$ such that, for every line $\ell\in \cL$, if $|S\cap\ell| > 1$ then $\ell\subseteq S$. We say that a subset $X$ of a subspace $S$ {\em generates} $S$, in symbols $S = \langle X\rangle$, if $S$ is the minimum subspace of $\Gamma$ containing $X$, namely $S$ is the intersection of all subspaces of $\Gamma$ which contain $X$. We also  recall that the \emph{generating rank} of a point-line geometry $\Gamma=(\cP,\cL)$ is the number $\gr(\Gamma):=\min \{|X|\colon X\subseteq\cP, \langle X\rangle= \cP\}.$

\subsection{Basic definitions and known results}\label{Intro1}
Let $\mathfrak{X}$ be a class of buildings such that, for every division ring $\KK$, at most one (up to isomorphism) member of $\mathfrak{X}$ is defined over $\KK$. For instance, $\mathfrak{X}$ can be the class of buildings belonging to a given simply laced Coxeter diagram or a given Dynkin diagram, possibly of twisted type. With $\mathfrak{X}$ as above, let $\Delta(\KK)$ be the member of $\mathfrak{X}$ defined over $\KK$ (provided it exists) and, for a nonempty subset $J$ of the type-set of $\Delta(\KK)$, let $\Gr_J(\Delta(\KK))$ be the $J$-grassmannian of $\Delta(\KK)$, regarded as a point-line geometry. For a sub-division ring $\KK_0$ of $\KK$, suppose that $\mathfrak{X}$ also contains a member $\Delta(\KK_0)$ defined over $\KK_0$ and $\Gr_J(\Delta(\KK))$ contains $\Gr_J(\Delta(\KK_0))$ as a subgeometry (as it is always the case for the geometries to be considered in this paper). We say that $\Gr_J(\Delta(\KK))$ is {\em generated over} $\KK_0$ ($\KK_0$-{\em generated} for short) if $\Gr_J(\Delta(\KK_0))$, as a set of points of the point-line geometry $\Gr_J(\Delta(\KK))$, generates $\Gr_J(\Delta(\KK))$.

Clearly, if $\Gr_J(\Delta(\KK))$ is $\KK_0$-generated and $\Gr_J(\Delta(\KK_0))$ is $\KK_1$-generated for a division ring $\KK_1 < \KK_0$, then $\Gr_J(\Delta(\KK))$ is $\KK_1$-generated too. It is also clear that if $\Gr_J(\Delta(\KK))$ is $\KK_0$-generated then the generating rank of $\Gr_J(\Delta(\KK))$ cannot be larger than that of $\Gr_J(\Delta(\KK_0))$. On the other hand, suppose that every finite set of points of $\Gr_J(\Delta(\KK))$ belongs to a subgeometry of $\Gr_J(\Delta(\KK))$ isomorphic to $\Gr_J(\Delta(\KK_0))$ for a finitely generated sub-division ring $\KK_0$ of $\KK$ (as it is often the case). Suppose moreover that $\KK$ is not finitely generated as a division-ring and that $\Gr_J(\Delta(\KK))$ is not $\KK_0$-generated, for any  $\KK_0 < \KK$. Then $\Gr_J(\Delta(\KK))$ has infinite generating rank, as we prove in Lemma \ref{cor-inf}. In short, obvious links exist between the $\KK_0$-generation problem and the computation of generating ranks. Less obviously, some relations also seem to exist between $\KK_0$-generability and the existence of the absolutely universal embedding. For instance, a number of grassmannians $\Gr_J(\Delta(\KK))$ for which the existence of the absolutely universal embedding is still an open problem, cannot be generated over any proper sub-division ring $\KK_0$ of $\KK$ (see Section \ref{ranghi}, Remark \ref{Rem1.6}).

We shall now briefly summarize what is currently known about $\KK_0$-generation. For $X_n$ a simply laced Coxeter diagram of rank $n$ or a Dynkin diagram of rank $n$ (but not of twisted type) and a division ring $\KK$ (a field if $X_n\neq A_n$), let $X_n(\KK)$ be the unique building of type $X_n$ defined over $\KK$. In particular, $B_n(\KK)$ and $C_n(\KK)$ are the buildings associated to the orthogonal group $\mathrm{O}(2n+1,\KK)$ and the symplectic group $\mathrm{Sp}(2n,\KK)$ respectively.

Suppose firstly that $\Gr_J(X_n(\KK))$ is spanned by $\Gr_J(A)$ for an apartment $A$ of $X_n(\KK)$ (for short, $\Gr_J(X_n(\KK))$ {\em is spanned by an apartment}). For every sub-division ring $\KK_0$ of $\KK$, the geometry $\Gr_J(A)$ is contained in a subgeometry of $\Gr_J(X_n(\KK))$ isomorphic to $\Gr_J(X_n(\KK_0))$. Hence $\Gr_J(X_n(\KK))$ is $\KK_0$-generated for any $\KK_0 \leq \KK$. In particular, $\Gr_J(X_n(\KK))$ is generated over the prime subfield of $\KK$.

It is known (Cooperstein and Shult~\cite{CS97}, Blok and Brouwer~\cite{BB98}) that the following grassmannians are generated by apartments, where
we take the integers $1, 2,\dots, n$ as types as usual but when $X_n = D_n$, according to the notation adopted in Section~\ref{mres}, we replace $n-1$ and $n$ with $+$ and $-$: $\Gr_k(A_n(\KK))$ for $1\leq k\leq n$; $\Gr_1(D_n(\KK))$ and $\Gr_+(D_n(\KK))$ as well as $\Gr_-(D_n(\KK))$; $\Gr_1(C_n(\KK))$ and $\Gr_n(B_n(\KK))$ but with $\mathrm{char}(\KK)\neq 2$ in both cases; $\Gr_1(E_6(\KK))$, $\Gr_6(E_6(\KK)))$ and $\Gr_1(E_7(\KK))$ (the nodes of the $E_7$-diagram being labeled as in~\cite{CS97}). Therefore, all above mentioned grassmannians are generated over the prime subfield of $\KK$. It is easily seen that the same holds for $\Gr_1(B_n(\KK))$, even if this geometry is not spanned by any apartment. It is likely that if $\mathrm{char}(\KK) \neq 2$ then, for every $i \leq n$, the $i$-grassmannian $\Gr_i(C_n(\KK))$ is generated over the prime subfield of $\KK$, but we are not aware of any explicit proof of this claim.

We now turn to $\Gr_{1,n}(A_n(\KK))$. This geometry is interesting in its own right. When $\KK$ is a field it is known as the {\em long root geometry} for $\mathrm{SL}(n+1,\KK)$. In~\cite{BPa01} it is proved that if $n > 2$ then $\Gr_{1,n}(A_n(\KK))$ is not $\KK_0$-generated, for any proper sub-division ring $\KK_0$ of $\KK$ (see also~\cite[Theorem 5.10]{ILP19} for an alternative proof in the special case where $n = 3$ and $\KK$ is a field). However, when $\KK$ is a field and $\KK=\KK_0(a_1,\dots, a_t)$ for suitable elements $a_1,\dots, a_t\in \KK\setminus \KK_0$, then $\Gr_{1,n}(A_n(\KK))$ can be generated by adding at most $t$ elements to $\Gr_{1,n}(A_n(\KK_0))$ (Blok and Pasini~\cite{BPa01}). In particular, when $\KK$ is finite, $(n+1)^2$ points are enough to generate  $\Gr_{1,n}(A_n(\KK_0))$. Indeed, in this case $\KK$ is a simple extension of its prime subfield $\KK_0$ and the generating rank of $\Gr_{1,n}(A_n(\KK_0))$ is equal to $(n+1)^2-1$ (Cooperstein~\cite{C98b}).

Not much is known on $\Gr_k(B_n(\KK))$ for $1 < k < n$ and $\Gr_k(D_n(\KK))$ for $1 < k \leq n-2$. Probably, what makes these cases so difficult to investigate is the fact that the special case  $\Gr_{1,3}(A_3(\KK))\cong \Gr_{+,-}(D_3(\KK))$ of $\Gr_{1,n}(A_n(\KK))$ somehow enters the game in any attempt to compute the generating rank of $\Gr_k(B_n(\KK))$ or $\Gr_k(D_n(\KK))$ and, as we have seen above, as far as generation is concerned, $\Gr_{1,n}(A_n(\KK))$ can behave wildly. Nevertheless,
in~\cite{ILP19} we have shown that for $\KK=\FF_4,\FF_8$ or $\FF_9$ the grassmannians $\Gr_2(B_n(\KK))$ ($n\geq 3$) and $\Gr_2(D_n(\KK))$ ($n > 3$) are generated over the corresponding prime subfields $\FF_2$ or $\FF_3$. The generating ranks of $\Gr_2(B_n(\KK_0))$ and $\Gr_2(D_n(\KK_0))$, for $\KK_0$ a finite field of prime order, are known to be equal to ${{2n+1}\choose 2}$ and ${{2n}\choose 2}$ respectively (Cooperstein \cite{C98b}). Hence ${{2n+1}\choose 2}$ and ${{2n}\choose 2}$ are the generating ranks of $\Gr_2(B_n(\KK))$ and $\Gr_2(D_n(\KK))$ respectively, with $\KK$ as above.

\subsection{Main results}

We state in this subsection the main results of this paper. We refer to Section~\ref{prelim} or to \cite{PasDG} and \cite{BuekC} for the notation and further details on the grassmannians we are dealing with.
The following, to be proved in Section \ref{pro}, is our first main result in this paper:
\begin{theorem}
  \label{m-th1}
For a division ring $\KK$, let $\Gamma(\KK)$ be one of the following: $Gr_{{1,n}}(A_n(\KK))$ for $n\geq 3$; $\Gr_{{+,-}}(D_n(\KK))$, $n\geq 3$;  $\Gr_{{1,+,-}}(D_n(\KK))$ with $n\geq 4$; $\Gr_{{1,-}}(D_n(\KK))$ for $n\geq 4$. Then $\Gamma(\KK)$ is not
$\KK_0$-generated for any proper sub-division ring $\KK_0$ of $\KK$.
 \end{theorem}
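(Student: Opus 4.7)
The plan is a case-by-case reduction anchored on the known $A_n$ result of \cite{BPa01}. The first Grassmannian, $\Gr_{1,n}(A_n(\KK))$, is precisely the content of \cite{BPa01} and needs no new work. The base case $\Gr_{+,-}(D_3(\KK))$ is dispatched by the classical identification $D_3(\KK) \cong A_3(\KK)$, which yields an isomorphism $\Gr_{+,-}(D_3(\KK)) \cong \Gr_{1,3}(A_3(\KK))$ that is compatible with a $\KK_0$-rational basis of the ambient vector space; the result for $A_3$ then transfers verbatim.

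For $\Gr_{1,+,-}(D_n(\KK))$ with $n \geq 4$ I would introduce the forgetful projection $\pi \colon \Gr_{1,+,-}(D_n(\KK)) \to \Gr_{+,-}(D_n(\KK))$, $(p, M_+, M_-) \mapsto (M_+, M_-)$. A short case analysis of the three line types of $\Gr_{1,+,-}$ shows that $\pi$ sends each line either to a single point (type $\{2,+,-\}$: fix $M_\pm$, vary $p$) or bijectively onto a line of $\Gr_{+,-}$ (types $\{1,n-2,\pm\}$). From this one deduces that the $\pi$-preimage of any subspace is a subspace, so $\pi$ carries subspace-generation to subspace-generation. Since $\pi$ maps $\KK_0$-rational points surjectively onto $\KK_0$-rational points, $\KK_0$-generation of $\Gr_{1,+,-}(D_n(\KK))$ would force $\KK_0$-generation of $\Gr_{+,-}(D_n(\KK))$, reducing this case to the $\Gr_{+,-}$ case below.

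For $\Gr_{+,-}(D_n(\KK))$ with $n \geq 4$ the plan is to proceed by induction on $n$. Fix a $\KK_0$-rational point $q$ of the polar space and let $\Gamma_q := \{(M_+, M_-) : q \in M_+ \cap M_-\}$. Taking residues modulo $q$ yields an isomorphism $\Gamma_q \cong \Gr_{+,-}(D_{n-1}(\KK))$. A direct line analysis shows $\Gamma_q$ is a \emph{convex} subgeometry of $\Gr_{+,-}(D_n(\KK))$: two distinct points of $\Gamma_q$ on a line of type $\{n-2, \pm\}$ force $q$ into the shared $(n-2)$-subspace, so the whole line lies in $\Gamma_q$. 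To transport the induction hypothesis I would construct a line-respecting retraction $\rho \colon \Gr_{+,-}(D_n(\KK)) \to \Gamma_q$ preserving $\KK_0$-rationality; combined with the preimage-is-subspace principle used for $\pi$ above, this yields descent of $\KK_0$-generation from $\Gr_{+,-}(D_n(\KK))$ to $\Gamma_q$, contradicting the induction hypothesis. The $\Gr_{1,-}(D_n(\KK))$ case would be handled analogously, by identifying inside it a convex subgeometry isomorphic to $\Gr_{+,-}(D_{n-1}(\KK))$ (or, for $n=4$, to $\Gr_{1,3}(A_3(\KK))$) obtained by fixing a suitable $\KK_0$-rational flag, and applying the same retraction mechanism.

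The main obstacle is the construction of the retraction $\rho$ and its analogue for $\Gr_{1,-}(D_n(\KK))$. Naïve candidates based on orthogonal projection to $q^\perp$ followed by adjunction of $q$ fail, because they flip the type ($+ \leftrightarrow -$) of the affected maximal singular subspace: for a singular $M_+$ with $q \notin M_+$, the subspace $\langle q, M_+ \cap q^\perp\rangle$ meets $M_+$ in codimension one and therefore sits in the opposite class. Overcoming this will require either a more subtle geometric construction of $\rho$ (perhaps composing two such elementary moves, or exploiting an auxiliary involutory structure), or a replacement of the retraction step by a direct analysis of how lines exiting $\Gamma_q$ and later re-entering it cannot contribute new $\Gamma_q$-points under iterated line closure starting from $\KK_0$-rational data.
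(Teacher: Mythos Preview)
Your reductions for $\Gr_{1,n}(A_n(\KK))$, for $\Gr_{+,-}(D_3(\KK))$ via Klein correspondence, and for $\Gr_{1,+,-}(D_n(\KK))$ via the forgetful projection $\pi$ are all sound. The projection argument in particular is clean: $\pi$ sends each line to a point or a line, so preimages of subspaces are subspaces, and surjectivity on $\KK_0$-rational points gives the descent.

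The gap is exactly where you locate it: the inductive step for $\Gr_{+,-}(D_n(\KK))$ with $n\geq 4$ (and, by dependence, for $\Gr_{1,-}(D_n(\KK))$). You need a line-respecting, $\KK_0$-rationality-preserving retraction $\rho$ onto the residue $\Gamma_q$, and you do not have one. Your own observation that the natural candidate flips the $\pm$-type is fatal, and the suggested fix of composing two such moves does not obviously yield a well-defined map on flags, let alone a line-preserving one. The alternative you float---tracking how line-closure can re-enter $\Gamma_q$---amounts to controlling the full generation process in the ambient geometry, which is the original problem. Without $\rho$, the convexity of $\Gamma_q$ alone buys nothing: knowing that $\Gamma(\KK_0)$ generates the ambient space does not imply that $\Gamma_q(\KK_0)$ generates $\Gamma_q$.

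The paper avoids this obstacle entirely by a direct, non-inductive construction. For each of the four geometries it defines the set $\Omega_{\KK_0}(\Gamma(\KK))$ of \emph{nearly $\KK_0$-rational} $J$-flags---those incident with some $\KK_0$-rational element whose type lies in $J$ or separates two types of $J$---and proves (i) this set is a subspace (a case analysis over line types, using that sums and intersections of $\KK_0$-rational subspaces are $\KK_0$-rational), and (ii) it is proper, by exhibiting an explicit flag built from vectors such as $e_1+e_3\eta$ with $\eta\in\KK\setminus\KK_0$, and invoking a linear-algebra lemma that any $\KK_0$-rational subspace containing $e_1+e_2\eta$ must contain both $e_1$ and $e_2$. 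Since $\Gamma(\KK_0)\subseteq\Omega_{\KK_0}(\Gamma(\KK))$, the theorem follows. This approach treats all four cases uniformly and gives, as a by-product, a new and simpler proof of the $A_n$ case, rather than quoting \cite{BPa01}.
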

As said in Section \ref{Intro1}, the case of $\Gr_{{1,n}}(A_n(\KK))$ has been already considered in~\cite{BPa01}, but the proof we shall give in this paper is different and simpler than that of \cite{BPa01}. Theorem \ref{m-th1} also contains a proof of a  conjecture presented in~\cite[Remark 5.11]{ILP19}.
\begin{corollary}\label{vecchio}
The $(n-1)$-grassmannian $\Gr_{n-1}(B^+_n(\KK))$ of the top-thin polar space $B^+_n(\KK) = \Gr_1(D_n(\KK))$ is not $\KK_0$-generated for any proper subfield $\KK_0$ of $\KK$.
\end{corollary}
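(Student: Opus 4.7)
The plan is to derive Corollary \ref{vecchio} as a direct transport of the relevant case of Theorem \ref{m-th1} along the canonical isomorphism $\Gr_{+,-}(D_n(\KK)) \cong \Gr_{n-1}(B^+_n(\KK))$ already recorded in the excerpt. So no new geometric machinery is needed; the content of the proof is entirely that this isomorphism respects the $\KK_0$-substructure on both sides.

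First I would recall the construction of the identification: a point of $\Gr_{+,-}(D_n(\KK))$ is a flag $\{S_+, S_-\}$ consisting of two maximal singular subspaces of opposite types of the hyperbolic quadric on $V$, necessarily meeting in a subspace $S_+ \cap S_-$ of dimension $n-1$; and the map $\{S_+, S_-\} \mapsto S_+ \cap S_-$ is a bijection onto the set of $(n-1)$-dimensional singular subspaces of $B^+_n(\KK)$ whose perp contains two maximals, i.e.\ onto the point set of $\Gr_{n-1}(B^+_n(\KK))$. Incidence with the two families of lines of $\Gr_{+,-}(D_n(\KK))$, flags of type $\{n-2,+\}$ and $\{n-2,-\}$, is translated into incidence with the (single family of) lines of $\Gr_{n-1}(B^+_n(\KK))$, which are the pencils of $(n-1)$-dimensional singular subspaces through a fixed $(n-2)$-dimensional singular subspace inside a fixed maximal.

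Second, I would verify that, for any subfield $\KK_0 \leq \KK$, the above bijection restricts to an isomorphism $\Gr_{+,-}(D_n(\KK_0)) \cong \Gr_{n-1}(B^+_n(\KK_0))$ between the corresponding subgeometries. This is immediate from the standard description of $D_n(\KK_0)$ and $B^+_n(\KK_0)$ as the subgeometries of $D_n(\KK)$ and $B^+_n(\KK)$ whose elements are subspaces spanned by vectors with coordinates in $\KK_0$ with respect to the chosen hyperbolic basis $E$, since intersection of subspaces and the defining form are both compatible with scalar restriction to $\KK_0$.

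Finally, suppose for contradiction that $\Gr_{n-1}(B^+_n(\KK))$ were $\KK_0$-generated by $\Gr_{n-1}(B^+_n(\KK_0))$ for some proper subfield $\KK_0 < \KK$. Transporting along the isomorphism built in the previous two steps would give that $\Gr_{+,-}(D_n(\KK_0))$ generates $\Gr_{+,-}(D_n(\KK))$, contradicting the case $\Gamma(\KK) = \Gr_{+,-}(D_n(\KK))$ of Theorem \ref{m-th1} (valid for all $n \geq 3$, which covers the full range allowed for $B^+_n$). The only non-routine point is the compatibility of the isomorphism with the $\KK_0$-substructures, but this is essentially tautological once the subgeometries are described in terms of the same basis $E$.
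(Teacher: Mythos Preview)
Your proposal is correct and follows exactly the paper's route: transport the $\Gr_{+,-}(D_n(\KK))$ case of Theorem~\ref{m-th1} along the isomorphism $\iota$, after checking that $\iota$ restricts to an isomorphism of the respective $\KK_0$-subgeometries. The paper isolates this last compatibility as a separate Proposition and is more explicit than you are about the less obvious direction --- that a $\KK_0$-rational $(n-1)$-singular subspace $U$ has both of its containing maximals $\KK_0$-rational (the two maximals over $U$ in $B^+_n(\KK_0)$ must coincide with the unique pair over $U$ in $B^+_n(\KK)$) --- which you label ``essentially tautological''; but the content is the same.
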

Seemingly, this corollary is an obvious consequence of Theorem \ref{m-th1} and the isomorphism $\Gr_{n-1}(B^+_n(\KK)) \cong \Gr_{+,-}(D_n(\KK))$. However its proof is not so trivial as one might believe; we will give it in Section \ref{subtle}.
As we shall see in Section \ref{Sec 3 bis}, Theorem \ref{m-th1} admits the following far reaching generalization:
\begin{theorem}
  \label{non-interval}
Let $\Gamma(\KK)$ be either $\Gr_{J}(A_n(\KK))$ or $\Gr_{J}(D_n(\KK))$, with $J$ a non-connected set of nodes of the diagram $A_n$ or $D_n$ respectively. Then $\Gamma(\KK)$ is not $\KK_0$-generated, for any proper sub-division ring $\KK_0$ of $\KK$.
\end{theorem}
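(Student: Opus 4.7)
My plan is to reduce Theorem~\ref{non-interval} to the four base cases of Theorem~\ref{m-th1} by two successive projections. The first is a \emph{forgetful projection lemma}: for any $J' \subseteq J$, $\KK_0$-generation of $\Gr_J(X_n(\KK))$ implies $\KK_0$-generation of $\Gr_{J'}(X_n(\KK))$. Indeed, the restriction map $\pi_{J'} : \Gr_J(X_n(\KK)) \to \Gr_{J'}(X_n(\KK))$ sending a $J$-flag to its sub-flag of types in $J'$ is surjective, preserves $\KK_0$-definability, and sends every line of $\Gr_J$ --- a flag of type $(J \setminus \{j\}) \cup \mathrm{fr}(j)$ --- either to a line of $\Gr_{J'}$ (when the free type $j$ lies in $J'$) or to a single point (when $j \in J \setminus J'$, since then every $J$-flag on the line has the same $J'$-part). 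A routine induction on the subspace closure process yields $\pi_{J'}(\langle S \rangle) \subseteq \langle \pi_{J'}(S) \rangle$, and surjectivity then forces $\langle \pi_{J'}(S) \rangle = \Gr_{J'}(X_n(\KK))$. Using this lemma I may replace $J$ by a non-connected subset of minimal size, keeping one node from each connected component --- a set of size two in $A_n$, and of size two or three in $D_n$.

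The second step is a \emph{residue reduction}. Given a minimal non-connected $J$ in $X_n$, the plan is to choose a $\KK_0$-defined flag $F$ of the complementary type $T = \{1, \ldots, n\} \setminus [\min J, \max J]$ (suitably amended at the fork in the $D_n$-case), so that $\Res_{X_n(\KK)}(F)$ is a connected building of type $X_m$ in which $J$ becomes an extremal non-connected set of the shape handled by Theorem~\ref{m-th1}. In $A_n$ with $J = \{j_1, j_2\}$ and $j_1 < j_2$, one obtains $\Res(F) \cong A_{j_2 - j_1 + 1}(\KK)$ with $J$ identified with $\{1, j_2 - j_1 + 1\}$; in $D_n$ the three shapes $\{+,-\}$, $\{1,+,-\}$ and $\{1,-\}$ arise according to which nodes of $J$ neighbour the fork. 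The aim is then to construct a retraction $\rho_F$ from $\Gr_J(X_n(\KK))$ onto $\Gr_J(\Res_{X_n(\KK)}(F))$ that is defined (at least) on a $\KK_0$-generating subfamily of $J$-flags, is surjective on points, preserves $\KK_0$-definability, and sends each line of $\Gr_J(X_n(\KK))$ to either a line or a point of $\Gr_J(\Res(F))$. Composed with $\pi_{J'}$, this transfers $\KK_0$-generation from the original Grassmannian to a base case of Theorem~\ref{m-th1}, yielding the desired contradiction.

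The main obstacle is the construction of $\rho_F$. The natural Levi-type rule
\[
\rho_F(V_{j_1}, V_{j_2}) = \bigl( ((V_{j_1} + U_{j_1-1}) \cap U_{j_2+1}) / U_{j_1-1},\ ((V_{j_2} + U_{j_1-1}) \cap U_{j_2+1}) / U_{j_1-1} \bigr),
\]
where the $U_i$ are the subspaces of $V$ underlying $F$, is only dimension-correct on the set of $J$-flags sufficiently transversal to $F$, and one must show that this transversal set is already closed under enough lines of $\Gr_J(X_n(\KK))$ to generate the whole geometry from its $\KK_0$-points, while on this set $\rho_F$ sends lines of $\Gr_J$ to lines or points of $\Gr_J(\Res(F))$. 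In the $D_n$ cases, the orthogonality constraints and the ramification at the fork introduce extra bookkeeping, with separate subcases according to the parity of $n$ and to whether $J$ intersects $\{+,-\}$.
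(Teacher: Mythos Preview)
Your Step 1 (the forgetful projection $\pi_{J'}$) is correct and clean: it is surjective on points, carries each line to a line or a point according as the free type lies in $J'$ or in $J\setminus J'$, and preserves $\KK_0$-rationality, so $\KK_0$-generation of $\Gr_J$ descends to $\Gr_{J'}$.

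Step 2 is a genuine gap, and you have in fact put your finger on it yourself without closing it. The Levi-type map $\rho_F$ is only defined on flags transversal to $F$, and even there it does not send lines to lines: on a line $\ell_{L,H}$ with free type $j_1$, as the $j_1$-element $x$ runs over the subspaces of $L$, the dimension of $(x+U_{j_1-1})\cap U_{j_2+1}$ is not constant unless $L$ is already in special position relative to $F$, and the image is in general not a full pencil. Your proposed remedy --- restricting to a ``transversal locus closed under enough lines'' --- would require that locus to be a subspace of $\Gr_J(X_n(\KK))$ containing $\Gamma(\KK_0)$. Neither holds: many $\KK_0$-rational $J$-flags fail transversality to any fixed $\KK_0$-rational $F$, and the transversal set is not line-closed. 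So the subspace-closure process that starts from $\Gamma(\KK_0)$ immediately leaves the domain of $\rho_F$, and nothing can be concluded about $\Gr_J(\Res(F))$.

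The paper avoids this entirely and never reduces to the base cases of Theorem~\ref{m-th1}. It simply picks one gap $j_1 < j_2$ in $J$ (consecutive elements of $J$ with $J\cap(j_1,j_2)=\emptyset$) and sets $\Omega_{\KK_0,j_1,j_2}(\Gamma(\KK))$ to be the $J$-flags incident with some $\KK_0$-rational element of type in $[j_1,j_2]$. In $A_n$, incidence of such an $X$ with a $J$-flag $P$ reduces to $P_{j_1}\subseteq X\subseteq P_{j_2}$; hence on a line with free type $j\notin\{j_1,j_2\}$ all points share the same pair $(P_{j_1},P_{j_2})$ and thus the same near-rationality status, while for free type $j_1$ or $j_2$ the argument of Lemma~\ref{Omega-geo}, Case~1, goes through verbatim with $1,n$ replaced by $j_1,j_2$. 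Lemma~\ref{Omega-proper} is adapted the same way. The extra $D_n$-shapes are handled by re-running the $\{1,+,-\}$ or $\{1,-\}$ arguments with the largest element of $J\cap\{1,\dots,n-2\}$ in the r\^ole of $1$. Your Step~1 is therefore correct but unnecessary; if you keep it, follow it not by a retraction but by this direct $\Omega$-argument on the reduced $J'$, which is literally the base-case proof with relabelled indices.
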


\subsection{Applications to generating ranks and embeddings}\label{ranghi}

Given a point-line geometry  $\Gamma=(\cP,\cL),$ a ({\em full}) {\em projective embedding} $e:\Gamma\rightarrow\PG(V)$ of $\Gamma$ (henceforth often called simply an {\em embedding} of $\Gamma$, for short) is an injective map $e:\cP\to\PG(V)$ from the point-set $\cP$ of $\Gamma$ to the set of points of the projective space $\PG(V)$ of a vector space $V$, such that for every line $\ell\in{\mathcal L}$ of $\Gamma$ the set $e(\ell):=\{ e(p)\colon p\in\ell\}$ is a projective
line of $\PG(V)$ and $e(\cP)$ spans $\PG(V)$. We put $\dim(e):=\dim(V)$, calling $\dim(e)$ the {\em dimension} of $e$. If $\KK$ is the underlying division ring of $V$, we say that $e$ is {\em defined over} $\KK$, also that $e$ is a $\KK$-{\em embedding}. If $\Gamma$ admits a projective embedding we say that $\Gamma$ is {\em projectively embeddable} (also {\em embeddable}, for short).
If $e:\Gamma\to\PG(V)$ and $e':\Gamma\to\PG(V')$ are two $\KK$-embeddings of $\Gamma$ we say that $e$ \emph{dominates} $e'$ if there is
a $\KK$-semilinear mapping $\varphi:V\rightarrow V'$ such that $e' =\varphi\cdot e$. If $\varphi$ is an isomorphism then we say that $e$ and $e'$ are {\em isomorphic}. Following Tits~\cite{Tits}, we say that an embedding $e$ is \emph{dominant} if, modulo isomorphisms, it is not dominated by any embedding other than itself. Every $\KK$-embedding $e$ of $\Gamma$ admits a {\em hull} $\tilde{e}$, uniquely determined up to isomorphism and characterized by the following property: $\tilde{e}$ dominates all $\KK$-embeddings of $\Gamma$ which dominate $e$ (see Ronan~\cite{Ronan}). Accordingly, an embedding is dominant if and only if it is the hull of at least one embedding; equivalently, if and only if it is its own hull. Finally, an embedding $\tilde{e}$ of $\Gamma$ is \emph{absolutely universal} (henceforth called just {\em universal}, for short) if it dominates all embeddings of $\Gamma$. In other words, $\Gamma$ admits the universal embedding if and only if all of its embeddings have the same hull, that common hull being the universal embedding of $\Gamma$. Note that this forces all embeddings of $\Gamma$ to be defined over the same division ring. Note also that the universal embedding, if it exists, is homogeneous, an embedding $e$ of $\Gamma$ being {\em homogeneous} if $eg \cong e$ for every automorphism $g$ of $\Gamma$.
The \emph{embedding rank} $\er(\Gamma)$ of an embeddable geometry $\Gamma$ is defined as follows:
  \[ \er(\Gamma):=\sup \{ \dim(\varepsilon): \varepsilon
    \text{ projective embedding of }\Gamma \}. \]
Obviously, if $\Gamma$ admits the universal embedding $\tilde{e}$ then $\er(\Gamma) = \dim(\tilde{e})$, but $\er(\Gamma)$ is defined even if no embedding of $\Gamma$ is universal.
If $e:\Gamma\to\PG(V)$ is an embedding of $\Gamma = (\cP,\cL)$ then stretching a line in $\Gamma$ through two collinear points $p, q\in \cP$ corresponds to forming the span $\langle v, w\rangle \subseteq V$ of any two non-zero vectors $v\in e(p)$ and $w\in e(q)$. If $X\subseteq \cP$ generates $\Gamma$ then $\cP = \cup_{n=0}^\infty X_n$ where $X_0 := X$ and $X_{n+1} := \cup_{p,q\in X_n}\langle p, q\rangle_\Gamma$. Consequently, if we select a non-zero vector $v_p\in e(p)$ for every point $p\in X$ then $\{v_p\}_{p\in X}$ spans $V$. This makes it clear that $|X| \geq \dim(e)$. Accordingly,
\begin{equation}\label{gen emb rk 1}
\dim(e)  ~ \leq ~ \gr(\Gamma).
\end{equation}
Therefore, if $\gr(\Gamma)$ is finite and $\dim(e) = \gr(\Gamma)$ then $e$ is dominant (hence universal, if $\Gamma$ admits the universal embedding). In any case, \eqref{gen emb rk 1} implies the following:
\begin{equation}\label{gen emb rk 2}
\er(\Gamma) ~ \leq ~ \gr(\Gamma).
\end{equation}
In fact the equality $\er(\Gamma)=\gr(\Gamma)$ holds for many embeddable geometries, but not for all of them. For instance Heiss \cite{H00} gives an example where $\gr(\Gamma) = \er(\Gamma) +1$. The example of \cite{H00} looks fairly artificial. A more natural example, where $\er(\Gamma)$ is finite but $\gr(\Gamma)$ is infinite, is given by Theorem~\ref{m-th2}, to be stated below. That theorem will be obtained in Section~\ref{Nuovo} with the help of the following lemma. In order to properly state it, we recall that a division ring $\KK$ is \emph{finitely generated} if it is generated as a division ring by a finite subset $X\subseteq \KK$, namely no proper sub-division ring of $\KK$ contains $X$. For instance, an algebraic extension of a finite field of prime order $\FF_p$ is finitely generated if and only if it is finite, in which case it is a simple extension of $\FF_p$. On the other hand, no algebraically closed field is finitely generated.
\begin{lemma}\label{cor-inf}
Let $\Gamma(\KK)$ be either $\Gr_{J}(A_n(\KK))$ or $\Gr_{J}(D_n(\KK))$ for a set of types $J$ non-connected as a set of nodes of $A_n$ or $D_n$. Suppose that $\KK$ is not finitely generated. Then the generating rank of $\Gamma(\KK)$ is infinite.
\end{lemma}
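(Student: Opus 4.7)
\noindent
\textbf{Proof plan for Lemma \ref{cor-inf}.} My plan is to argue by contradiction, exploiting Theorem \ref{non-interval}. Assume that $\gr(\Gamma(\KK))$ is finite. Then there exists a finite set $X = \{p_1,\dots,p_N\}$ of points of $\Gamma(\KK)$ that generates $\Gamma(\KK)$ as a point-line geometry. I shall construct a proper sub-division ring $\KK_0 < \KK$ with $X \subseteq \Gr_J(X_n(\KK_0))$; then $\Gr_J(X_n(\KK_0))$ also generates $\Gr_J(X_n(\KK))$, i.e.\ $\Gamma(\KK)$ is $\KK_0$-generated, contradicting Theorem \ref{non-interval}.

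The key step is the construction of $\KK_0$. Fix the basis $E$ of the ambient vector space $V$ as prescribed in Sections \ref{rational1} and \ref{rational2}, so that for every sub-division ring $\KK_1$ of $\KK$ the corresponding geometry $X_n(\KK_1)$ sits inside $X_n(\KK)$ as a subgeometry in the canonical way. Each point $p_i \in X$ is a flag of $X_n(\KK)$, hence a finite collection of subspaces of $V$, each of which is spanned over $\KK$ by finitely many vectors. Writing out each such vector in the basis $E$ produces a finite set of scalars in $\KK$; let $S\subset\KK$ be the (finite) union of all these scalars, taken over all the $p_i$'s, and let $\KK_0$ be the sub-division ring of $\KK$ generated by $S$. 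By construction $\KK_0$ is finitely generated, so $\KK_0 \neq \KK$ because $\KK$ is not finitely generated by hypothesis.

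It remains to check that each $p_i$ actually belongs to $\Gr_J(X_n(\KK_0))$. For every subspace $U\subseteq V$ occurring in the flag $p_i$, the chosen spanning vectors lie in $V_0 := V\otimes_\KK\KK_0$ by the very definition of $\KK_0$; since $\KK$-linearly independent vectors in $V_0$ remain $\KK_0$-linearly independent, they span a $\KK_0$-subspace $U_0\subseteq V_0$ of the same dimension as $U$, and $U = U_0\otimes_{\KK_0}\KK$. In the $D_n$ case one further has to know that the quadratic form involved restricts to $V_0$ and that $U_0$ is totally isotropic for this restriction: this follows from the choice of $E$ made in Section \ref{rational2}, which ensures that the Gram matrix has entries in the prime subfield of $\KK$ (hence in $\KK_0$). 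Therefore $p_i$ is indeed a flag of $X_n(\KK_0)$, and $X\subseteq \Gr_J(X_n(\KK_0))$, yielding the contradiction announced above.

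The step I expect to require the most care is precisely this last bookkeeping point: one has to be sure that the subgeometry inclusion $\Gr_J(X_n(\KK_0))\hookrightarrow \Gr_J(X_n(\KK))$ is the one making the notion ``$\KK_0$-generated'' in Theorem \ref{non-interval} applicable. This is not an issue for $A_n$, where only linear algebra over $V$ is used, but in the $D_n$ case it hinges on the compatibility of the chosen basis $E$ with the quadratic form, exactly the setup fixed in Sections \ref{rational1}--\ref{rational2}. Once that compatibility is in place, the proof reduces to the elementary observation that a finite subset of $\KK$ can never generate a non-finitely-generated division ring, together with a direct invocation of Theorem \ref{non-interval}.
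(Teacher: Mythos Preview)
Your proposal is correct and follows essentially the same approach as the paper: collect the finitely many coordinates appearing in a putative finite generating set, let $\KK_0$ be the (proper, since $\KK$ is not finitely generated) sub-division ring they generate, observe that the generating set then lies in $\Gr_J(X_n(\KK_0))$, and invoke Theorem \ref{non-interval}. The paper phrases this directly (showing every finite set spans a proper subspace) rather than by contradiction, but the argument is the same.
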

Lemma~\ref{cor-inf} will be obtained in Section~\ref{Nuovo} as a consequence of Theorem~\ref{non-interval}. By exploiting it we will obtain the following:
\begin{theorem}\label{m-th2}
 Let $\FF_p$ be a finite field of prime order and $\overline{\FF}_p$ its algebraic closure. Then, for $n \geq 3$, the geometry $\Gr_{1,n}(A_n(\overline{\FF}_p))$ has infinite generating rank but its embedding rank is equal to either $(n+1)^2-1$ or $(n+1)^2$.
\end{theorem}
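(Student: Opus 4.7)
The plan is to establish the two assertions separately. The first, that $\gr(\Gr_{1,n}(A_n(\overline{\FF}_p)))=\infty$, is immediate from Lemma~\ref{cor-inf}: the set $J=\{1,n\}$ is non-connected in the diagram $A_n$ whenever $n\ge 3$, and $\overline{\FF}_p$ is not finitely generated as a division ring, being an infinite algebraic extension of $\FF_p$.

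For the upper bound $\er(\Gr_{1,n}(A_n(\overline{\FF}_p)))\le(n+1)^2$ I would exploit the fact that $\overline{\FF}_p$ is the directed union of its finite subfields. Write $\Gamma:=\Gr_{1,n}(A_n(\overline{\FF}_p))$ and, for each $k\ge 1$, set $\Gamma_k:=\Gr_{1,n}(A_n(\FF_{p^{k!}}))$, regarded as a subgeometry of $\Gamma$. Because every flag of $A_n(\overline{\FF}_p)$ is defined using only finitely many field elements, $\Gamma=\bigcup_k\Gamma_k$. Fix any projective embedding $\varepsilon:\Gamma\to\PG(V)$ and set $V_k:=\langle\varepsilon(\Gamma_k)\rangle\le V$. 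For a minimal spanning subset $S_k\subseteq\Gamma_k$, an induction on the generating process inside $\Gamma_k$ yields $\varepsilon(\Gamma_k)\subseteq\langle\varepsilon(S_k)\rangle$: indeed, any line $\ell_k$ of $\Gamma_k$ is the very same flag of $A_n(\overline{\FF}_p)$ as a line $\ell$ of $\Gamma$, so $\varepsilon(\ell_k)\subseteq\varepsilon(\ell)=\langle\varepsilon(p),\varepsilon(q)\rangle$ for any two distinct $p,q\in\ell_k$. Hence $\dim V_k\le|S_k|=\gr(\Gamma_k)$. Combining Cooperstein's computation $\gr(\Gr_{1,n}(A_n(\FF_p)))=(n+1)^2-1$ with the Blok--Pasini result recalled in Section~\ref{Intro1}, applied to the simple extension $\FF_{p^{k!}}=\FF_p(a_k)$, gives $\gr(\Gamma_k)\le (n+1)^2-1+1=(n+1)^2$. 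Thus $V_1\le V_2\le\cdots$ is an ascending chain of subspaces of $V$ with $\dim V_k\le(n+1)^2$ for all $k$ and $\bigcup_k V_k=V$. Such a chain must stabilise, so $\dim V\le(n+1)^2$, and since $\varepsilon$ was arbitrary, $\er(\Gamma)\le(n+1)^2$.

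For the matching lower bound I would invoke the classical long-root (adjoint) embedding, available over any division ring. Let $V_0$ be the standard $(n+1)$-dimensional module; the map
$$\{P,H\}\mapsto\bigl\langle v\mapsto\phi(v)u\bigr\rangle\in\PG(\mathrm{End}(V_0)),$$
where $u$ spans $P$ and $\ker\phi=H$, is well defined since rescaling $u$ or $\phi$ only rescales the rank-one endomorphism, and is injective because one recovers $P$ as the image and $H$ as the kernel of the endomorphism. The condition $P\le H$ forces $\phi(u)=0$, so the image lies in $\mathfrak{sl}_{n+1}(\overline{\FF}_p)$. A direct check shows that both line-types $\{2,n\}$ and $\{1,n-1\}$ go to projective lines, and that the image spans the whole of $\mathfrak{sl}_{n+1}(\overline{\FF}_p)$, of dimension $(n+1)^2-1$: every matrix $E_{ij}$ with $i\ne j$ is in the image, while every $E_{ii}-E_{jj}$ is expressible as a short combination of rank-one trace-zero matrices. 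Therefore $\er(\Gamma)\ge(n+1)^2-1$, and the two bounds together give $\er(\Gamma)\in\{(n+1)^2-1,(n+1)^2\}$, as required.

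The delicate step, I expect, is the upper-bound argument: one must resist the temptation to call the restriction of $\varepsilon$ to $\Gamma_k$ a projective embedding over $\overline{\FF}_p$ in the strict sense, since a line of $\Gamma_k$ carries only $p^{k!}+1$ of the infinitely many $\overline{\FF}_p$-points of the corresponding line of $\Gamma$. What rescues the argument is the observation above that the image of a $\Gamma_k$-line nevertheless lies in the $\overline{\FF}_p$-projective line through the images of any two of its points, which is all the induction needs to control $\dim V_k$.
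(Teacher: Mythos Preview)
Your argument is correct and follows essentially the same approach as the paper: both use Lemma~\ref{cor-inf} for the infinite generating rank, the adjoint (Lie) embedding for the lower bound $(n+1)^2-1$, and the observation that restricting a full embedding of $\Gamma$ to a finite-subfield subgeometry $\Gamma_{\LL}$ yields a lax embedding, so that $\dim\langle\varepsilon(\Gamma_{\LL})\rangle\le\gr(\Gamma_{\LL})\le(n+1)^2$ by Cooperstein together with the Blok--Pasini simple-extension bound. The only cosmetic difference is packaging: the paper argues the upper bound by contradiction (pick $(n+1)^2+1$ points with independent images and locate them in a single finite subfield $\LL$), whereas you run a direct ascending-chain argument over the tower $\FF_{p^{k!}}$; the underlying content is identical.
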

\begin{remark}\label{Rem1.6}
 It is well known that if $\KK$ is a field then $\Gr_{1,n}(A_n(\KK))$ admits an $(n+1)^2-1$ dimensional embedding, say $e_{\mathrm{Lie}}$, in (the projective space of) the space of square matrices of order $n+1$ with entries in $\KK$ and null trace (see e.g. Blok and Pasini~\cite{BPa03}; the choice of the symbol $e_{\mathrm{Lie}}$ for this embedding is motivated by the fact that it affords the representation of the group $\mathrm{SL}(n+1,\KK)$ in its action on its own Lie algebra). However $\Gr_{1,n}(A_n(\KK))$  does not satisfy the sufficient conditions of Kasikova and Shult~\cite{K-S} for the existence of the universal embedding. So, we do not know if it always admits the universal embedding, let alone if $e_{\mathrm{Lie}}$ is universal. A complete answer is known only when $\KK$ is a prime field. In this case $e_{\mathrm{Lie}}$ is indeed universal (Blok and Pasini~\cite[Section 3]{BPa03}). A bit less is known when $\KK$ is a number field or a perfect field of positive characteristic; in this case $e_{\mathrm{Lie}}$ dominates all homogeneous embeddings of $\Gr_{1,n}(A_n(\KK))$ (V\"{o}lklein \cite{Volklein}).
As for the remaining geometries of Theorem \ref{m-th1}, namely $\Gr_{{+,-}}(D_n(\KK))$, $\Gr_{{1,+,-}}(D_n(\KK))$ and $\Gr_{{1,-}}(D_n(\KK))$, they too are embeddable (see~\cite{BPa03}) and, when $\KK$ is a prime field, they admit the universal embedding (Blok and Pasini~\cite[Section 4]{BPa03}), even if none of them satisfies the conditions of Kasikova and Shult~\cite{K-S}.
\end{remark}
\begin{remark}
The geometry $\Delta^+_2$ of~\cite{IP13} with $n = 3$ is the same as $\Gr_{1,3}(A_3(\FF))$. According to the above, Lemma 4.8 of~\cite{IP13}, which deals with that geometry and its Weyl embedding $\varepsilon_2^+$ (which is the same as $e_{\mathrm{Lie}}$), might possibly be wrong as stated. It should be corrected as follows: when $n = 3$ and $\FF$ is a perfect field of positive characteristic or a number field, then $\tilde{\varepsilon}^+_2$ dominates all homogeneous embeddings of $\Delta^+_2$.
\end{remark}
\begin{remark}\label{lax}
In our survey of embeddings we have stuck to full projective embeddings, but in the proof of Theorem \ref{m-th2} we shall deal also with lax embeddings. {\em Lax projective embeddings} are defined in the same way as full projective embeddings but for replacing the condition that $e(\ell)$ is a line of $\PG(V)$ with the weaker condition that $e(\ell)$ spans a line of $\PG(V)$, for every line $\ell$ of $\Gamma$. Many authors also require that no two lines of $\Gamma$ span the same line of $\PG(V)$, but in view of our needs in this paper we can safely renounce that requirement. The only fact relevant for us is that inequality \eqref{gen emb rk 1} holds true even if $e$ is lax, as it is clear from the way we have obtained it.
\end{remark}

 \section{Preliminaries}
\label{prelim}
\subsection{Setting and notation}
\label{mres}
We refer to~\cite[Chapter 5]{PasDG} for the definition of the $J$-grassmannian $\Gr_J(\Delta)$ of a geometry $\Delta$. We recall that when $\Delta$ satisfies the so-called Intersection Property (which is always the case when $\Delta$ is a building) then $\Gr_J(\Delta)$ is the same as the $J$-shadow space of $\Delta$ as defined by Tits~\cite[Chapter 12]{Tits}. According to~\cite{PasDG} (and~\cite{Tits}), the $J$-grassmannian of a geometry $\Delta$ is a geometry with a string-shaped diagram graph and the same rank as $\Delta$, but in this paper, following Buekenhout and Cohen~\cite[\S 2.5]{BuekC}, we shall mostly regard it as a point-line geometry, with the $J$-flags of $\Delta$ taken as points, while the lines are the flags of $\Delta$ of type $(J\setminus\{j\})\cup\mathrm{fr}(j)$ for $j\in J$, where $\mathrm{fr}(j)$ stands for the set of types adjacent to $j$ in the diagram of $\Delta$; a point and a line of $\Gr_J(\Delta)$ are incident precisely when they are incident as flags of $\Delta$.
So, the lines of $\Gr_J(\Delta)$ are particular flags of $\Delta$. This setting will indeed be helpful in some respects but it forces to distinguish between a line and its set of points and this distinction often ends in a burden for the exposition; we will often neglect it. This is a harmless abuse. Indeed only grassmannians of buildings are considered in this paper; buildings satisfy the Intersection Property and, if that property holds in a geometry $\Delta$, then no two lines of $\Gr_J(\Delta)$ have the same points (even better: no two lines of $\Gr_J(\Delta)$ have two points in common).
As in Section~\ref{Intro1}, given a division ring $\KK$, we denote by $A_n(\KK)$ the building of type $A_n$ defined over $\KK$. Similarly, if the division ring $\KK$ is a field (namely, is commutative) then $D_n(\KK)$ stands for the building of type $D_n$ defined over $\KK$. We allow $n = 3$ in $D_n$. So, $D_3 = A_3$. Nevertheless, when writing $D_3(\KK)$ we always understand that $\KK$ is a field, for consistency of notation.
Let $X_n$ stand for either $A_n$ or $D_n$. It is well known that the elements of $X_n(\KK)$ can be identified with suitable vector subspaces of a vector space $V$ over $\KK$ of dimension either $n+1$ or $2n$ according to whether $X_n = A_n$ or $X_n = D_n$. Similarly, given a proper sub-division ring  $\KK_0$ of $\KK$, the building $X_n(\KK_0)$ is realized in a vector space $V_0$ over $\KK_0$, of the same dimension as $V$. We can always assume that $V_0$ is the set of $\KK_0$-linear combinations of the vectors of a selected basis $E$ of $V$, so that $V$ is obtained from $V_0$ by scalar extension from $\KK_0$ to $\KK$. Thus, with $E$ suitably selected when $X_n = D_n$, the building $X_n(\KK_0)$ is turned into a subgeometry of $X_n(\KK)$ (see Sections~\ref{rational1} and \ref{rational2} for more details). Accordingly, for every subset $J$ of the set of nodes of the diagram $X_n$, the $J$-grassmannian $\Gr_J(X_n(\KK_0))$ can be regarded as a subgeometry of $\Gr_J(X_n(\KK))$. Our main goal in this paper is to show that, if $J$ consists of extremal nodes of $X_n$ and $|J| > 1$ then $\Gr_J(X_n(\KK_0))$ does not generate $\Gr_J(X_n(\KK))$.
We firstly consider the $\{1,n\}$-grassmannian $\Gr_{1,n}(A_n(\KK))$ of $A_n(\KK)$; see
Fig.~\ref{Gr1n(An)}.
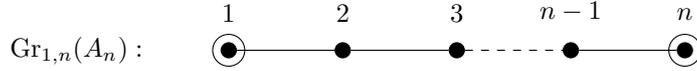
\begin{figure}[h]
\begin{center}
  \begin{tikzpicture}(50,7)
            \tikzstyle{every node}=[draw,circle,fill=black,minimum size=2mm,
            label distance=4pt, inner sep=2pt]
            \node [draw=none,rectangle,fill=none] at (-2,0) {$\Gr_{{1,n}}(A_n):$};
            \draw
            (0,0) node[label=above:$1$](1) {}
            (1.5,0) node[label=above:$2$](2) {}
            (3,0) node[label=above:$3$](3) {}
            (4.5,0) node[label={[yshift=-0.25cm]{$n-1$}}](n-1) {}
            (6,0) node[label={[yshift=+0.0cm]$n$}](n) {};
            \draw (1)--(2)--(3);
            \draw (n-1)--(n);
            \draw [dashed] (3)--(n-1);
            \draw (n) circle (6pt);
            \draw (1) circle (6pt);
\end{tikzpicture}
\end{center}
\caption{The $\{1,n\}$-grassmannian of $A_n$}
\label{Gr1n(An)}
\end{figure}\\
\noindent
The points of $\Gr_{{1,n}}(A_n(\KK))$ are flags of type $\{1,n\}$ in $A_n(\KK)$; its lines are flags of type either $\{2,n\}$ or
$\{1,n-1\}$; a point $p$ and a line $\ell$ are incident if and only if $p\cup\ell$ is a flag of $A_n(\KK)$.
Turning to $D_n$, we label the nodes of this diagram as in Fig.~\ref{Dn}.
\begin{figure}[h]
        \begin{center}
          \begin{tikzpicture}(50,7)
            \tikzstyle{every node}=[draw,circle,fill=black,minimum size=2mm,
            label distance=4pt, inner sep=2pt]
            \draw
            (0,0) node[label=above:$1$](1) {}
            (1.5,0) node[label=above:$2$](2) {}
            (3,0) node[label=above:$3$](3) {}
            (4.5,0) node[label={[yshift=-0.25cm]{$n-3$}}](n-3) {}
            (6,0) node[label={[yshift=-0.25cm]$n-2$}](n-2) {}
            (8,0.8) node[label=right:$ +$](+) {}
            (8,-0.8) node[label=right:$-$](-) {};
             \draw (1)--(2)--(3);
 \draw (n-3)--(n-2)--(+);
 \draw (n-2)--(-);
 \draw [dashed] (3)--(n-2);
\end{tikzpicture}
\end{center}
\caption{Labeling of types for buildings of type $D_{n}$}
\label{Dn}
\end{figure}
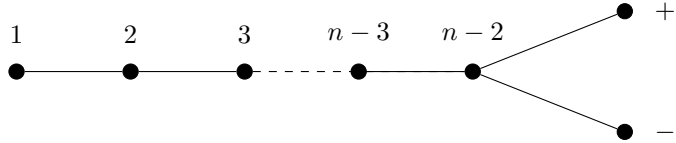\\
\noindent
We are interested in the $J$-grassmannians  $\Gr_{J}(D_n(\KK))$, where
$J=\{+,-\}$ or $J=\{1,+,-\}$ or $J=\{1,-\}$ (we can omit the case $J = \{1,+\}$ since $\Gr_{1,+}(D_n(\KK)) \cong \Gr_{1,-}(D_n(\KK))$); see Fig.~\ref{DnGeom}.
\begin{figure}[t]
        \begin{center}
          \begin{tikzpicture}(50,7)
            \tikzstyle{every node}=[draw,circle,fill=black,minimum size=2mm,
            label distance=4pt, inner sep=2pt]
            \node [draw=none,rectangle,fill=none] at (-2,0) {$\Gr_{{+,-}}(D_n):$};
            \draw
            (0,0) node[label=above:$1$](1) {}
            (1.5,0) node[label=above:$2$](2) {}
            (3,0) node[label=above:$3$](3) {}
            (4.5,0) node[label={[yshift=-0.25cm]{$n-3$}}](n-3) {}
            (6,0) node[label={[yshift=-0.25cm]$n-2$}](n-2) {}
            (8,0.8) node[label=right:$ +$](+) {}
            (8,-0.8) node[label=right:$-$](-) {};
             \draw (1)--(2)--(3);
 \draw (n-3)--(n-2)--(+);
 \draw (n-2)--(-);
 \draw [dashed] (3)--(n-2);
 \draw (+) circle (6pt);
 \draw (-) circle (6pt);
 \end{tikzpicture}
 \vskip.9cm
 \begin{tikzpicture}(50,7)
            \tikzstyle{every node}=[draw,circle,fill=black,minimum size=2mm,
            label distance=4pt, inner sep=2pt]
              \node [draw=none,rectangle,fill=none] at (-2,0) {$\Gr_{{1,+,-}}(D_n):$};
            \draw
            (0,0) node[label=above:$1$](1) {}
            (1.5,0) node[label=above:$2$](2) {}
            (3,0) node[label=above:$3$](3) {}
            (4.5,0) node[label={[yshift=-0.25cm]{$n-3$}}](n-3) {}
            (6,0) node[label={[yshift=-0.25cm]$n-2$}](n-2) {}
            (8,0.8) node[label=right:$ +$](+) {}
            (8,-0.8) node[label=right:$-$](-) {};
             \draw (1)--(2)--(3);
 \draw (n-3)--(n-2)--(+);
 \draw (n-2)--(-);
 \draw [dashed] (3)--(n-2);
 \draw (+) circle (6pt);
 \draw (-) circle (6pt);
 \draw (1) circle (6pt);
\end{tikzpicture}
\vskip.9cm
\begin{tikzpicture}(50,7)
            \tikzstyle{every node}=[draw,circle,fill=black,minimum size=2mm,
            label distance=4pt, inner sep=2pt]
            \node [draw=none,rectangle,fill=none] at (-2,0) {$\Gr_{{1,-}}(D_n):$};
            \draw
            (0,0) node[label=above:$1$](1) {}
            (1.5,0) node[label=above:$2$](2) {}
            (3,0) node[label=above:$3$](3) {}
            (4.5,0) node[label={[yshift=-0.25cm]{$n-3$}}](n-3) {}
            (6,0) node[label={[yshift=-0.25cm]$n-2$}](n-2) {}
            (8,0.8) node[label=right:$ +$](+) {}
            (8,-0.8) node[label=right:$-$](-) {};
             \draw (1)--(2)--(3);
 \draw (n-3)--(n-2)--(+);
 \draw (n-2)--(-);
 \draw [dashed] (3)--(n-2);
\draw (-) circle (6pt);
 \draw (1) circle (6pt);
\end{tikzpicture}
\end{center}
\caption{Geometries associated to buildings of type $D_n$}
\label{DnGeom}
\end{figure}
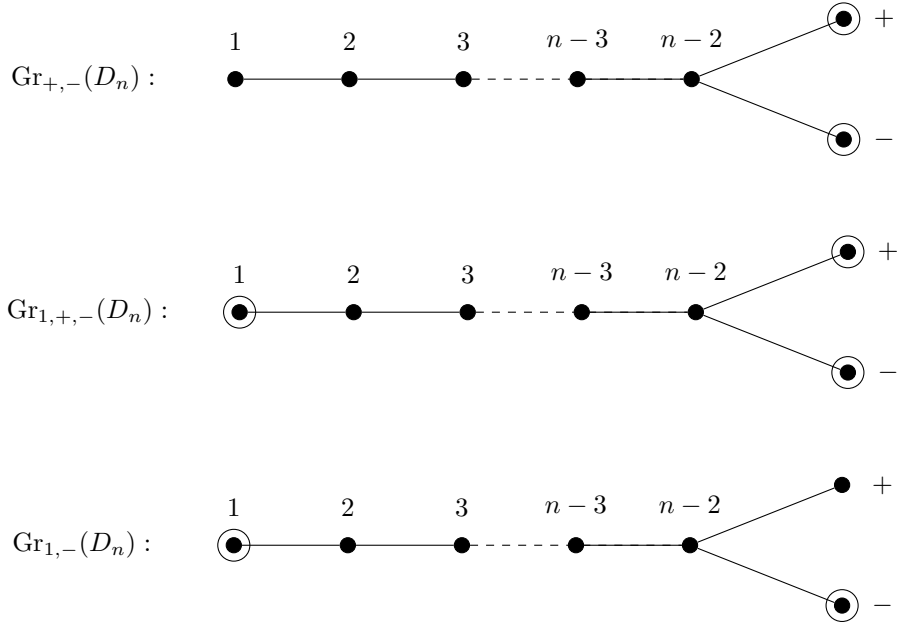
Explicitly, the points of $\Gr_{{+,-}}(D_n(\KK))$ are the flags of $D_n(\KK)$ of type $\{+,-\}$ while the lines
are the flags of types  $\{n-2,+\}$ and $\{n-2,-\}$ with incidence between a point $p$ and a line $\ell$ given
by the condition that $p\cup\ell$ must be a flag of $D_n(\KK)$. As for $\Gr_{{1,+,-}}(D_n(\KK))$, its points are the flags of type $\{1,+,-\}$,
and the lines are the flags of type $\{2,+,-\}$, $\{1,n-2,+\}$ or
$\{1,n-2,-\}$; incidence is defined as above. Finally, the points of $\Gr_{{1,-}}(D_n(\KK))$ are the flags of type $\{1,-\}$
and the lines are the flags of type either $\{2,-\}$ or $\{1,n-2\}$.
\noindent
Note that when $n = 3$, since $D_3(\KK)\cong A_3(\KK)$, we have $\Gr_{{1,3}}(A_3(\KK)) \cong \Gr_{{+,-}}(D_3(\KK))$. In any case, $\Gr_{{+,-}}(D_n(\KK)) \cong \Gr_{n-1}(B_n^+(\KK))$, where $B_n^+(\KK):=\Gr_{1}(D_n(\KK))$ is the $1$-grassmannian of $D_n(\KK)$ (but regarded as a geometry of rank $n$), namely the top-thin polar space associated to the group $\mathrm{O}^+(2n, \KK)$; see Fig.~\ref{BnGeom}.
\par
In the following we shall add more details on the grassmannians introduced before.
In particular, we shall better explain in which sense $\Gr_J(A_n(\KK))$ and $\Gr_J(D_n(\KK))$ contain $\Gr_J(A_n(\KK_0))$ and $\Gr_J(D_n(\KK_0))$ for a sub-division ring $\KK_0$ of $\KK$.
\begin{figure}[b]
        \begin{center}
          \begin{tikzpicture}(50,7)
            \tikzstyle{every node}=[draw,circle,fill=black,minimum size=2mm,
            label distance=4pt, inner sep=2pt]
            \node [draw=none,rectangle,fill=none] at (-2,0) {$\Gr_{n-1}(B_n^+):$};
            \draw
            (0,0) node[label=above:$1$](1) {}
            (1.5,0) node[label=above:$2$](2) {}
            (3,0) node[label=above:$3$](3) {}
            (4.5,0) node[label={[yshift=-0.25cm]{$n-2$}}](n-2) {}
            (6,0) node[label={[yshift=-0.25cm]$n-1$}](n-1) {}
            (8,0) node[label={[yshift=0.0cm]:$n$}](n) {};
            \draw (1)--(2)--(3);
            \draw (n-2)--(n-1);
            \draw [transform canvas={yshift=+2.2pt}] (n-1)--(n);
            \draw [transform canvas={yshift=-1.2pt}] (n-1)--(n);
            \draw [dashed] (3)--(n-2);
            \draw (n-1) circle (6pt);
          \end{tikzpicture}
        \end{center}
        \caption{Geometry $\Gr_{n-1}(B_n^+(\KK)) \cong \Gr_{+,-}(D_n(\KK))$}
        \label{BnGeom}
      \end{figure}
\clearpage

\subsection{The geometry $A_n(\KK)$ and its grassmannian $\Gr_{1,n}(A(\KK))$}\label{models1}

Let $A_n(\KK)$ be a geometry of type $A_n$ defined over a
division ring $\KK$, with $n\geq 3$. Explicitly, $A_n(\KK)\cong\PG(V_{n+1}(\KK))$ for a  $(n+1)$-dimensional right
$\KK$-vector space $V_{n+1}(\KK)$. For $i=1,2,\dots,n$ the elements of $A_n(\KK)$ of type
$i$ are the $i$-dimensional subspaces of $V_{n+1}(\KK)$, with symmetrized inclusion as the incidence relation.
As customary we call the elements of $A_n(\KK)$ of type $1$, $2$ and $n$ {\em points}, {\em lines} and {\em hyperplanes} respectively. The elements of type $n-1$ will be called {\em sub-hyperplanes}. Note that, when $n = 3$, lines and sub-hyperplanes are the same objects.

Turning to $\Gr_{{1,n}}(A_n(\KK))$, its points are the point-hyperplane flags $(p,H)$ of $A_n(\KK)$. Its lines, regarded as sets of points, are of either of the following two types:
\begin{align}
\label{Anlines:a} &\ell_{p,S}:=\{ (p,X) \colon X \mbox{ hyperplane, } X\supset S\}\mbox{ for a (point, sub-hyperplane) flag } (p,S). \\
\label{Anlines:b} &\ell_{L,H}:=\{ (x,H) \colon x \mbox{ a point, } x\subset L\}, \mbox{ for a line-hyperplane flag } (L,H).
\end{align}

\subsection{$D_n(\KK)$ and $\Gr_J(D_n(\KK))$ for $J = \{+,-\}, \{1,-\}$ or $\{1,+,-\}$}\label{models2}

Let $\KK$ be a field and  $V_{2n}(\KK)$ a vector space of dimension $2n$ over $\KK$, with $n\geq 3$. Consider
a non-degenerate quadratic form $q$ on $V_{2n}(\KK)$ of Witt index $n$. As in Section~\ref{mres}, let $B_n^+(\KK)$ be the polar space associated to $q$, namely the (weak) building of rank $n$ whose elements are the vector subspaces of $V_{2n}(\KK)$ that are totally singular with respect to $q$, with their dimensions taken as types. The elements of $B^+_n(\KK)$ of dimension $1$ are called {\em points} and those of dimension $2$ \emph{lines}.

It is well known that we can ``unfold'' $B^+_n(\KK)$ as to obtain a building $D_n(\KK)$ of type $D_n$ (see e.g. Tits~\cite[Chapter 7]{Tits}). Explicitly, let $\sim$ be the equivalence relation on the set of all $n$-dimensional subspaces of $B_n^+(\KK)$ defined as follows: $X\sim Y$ if and only if $X\cap Y$ has even codimension in $X$ (equivalently, in $Y$).  Let $\fS^+$ and $\fS^-$ be the two equivalence classes of $\sim$. Take $\{1,2,\dots,n-2,+,-\}$ as the set of types. For $1\leq i\leq n-2$ the $i$-elements of $B_n^+(\KK)$ are the elements of $D_n(\KK)$ of type $i$ and the elements of $\fS^+$ and $\fS^-$ are given types $+$ and $-$ respectively. The $(n-1)$-elements of $B_n^+(\KK)$ are dropped (but we can recover them as flags of type $\{+,-\}$). Incidence between elements of different types $\{i,j\}$ with $\{i,j\}\neq\{+,-\}$ is symmetrized inclusion; if $X\in\fS^+$ and $Y\in\fS^-$ then $X$ is incident with $Y$ if and only if $\dim(X\cap Y)=n-1$.

It is clear from the way  $D_n(\KK)$ is defined that the $1$-grassmannian $\Gr_1(D_n(\KK))$ of $D_n(\KK)$, regarded as a geometry of rank $n$, is just the same as $B^+_n(\KK)$. So, we can go back and forth from $D_n(\KK)$ to $B^+_n(\KK)$ as if they were the same object. In the sequel we will sometimes avail ourselves of this opportunity, when profitable.

Turning to grassmannians, $\Gr_{{+,-}}(D_n(\KK))$ is the point-line geometry whose points are the flags $(M_1,M_2)$ of $D_n(\KK)$ of type $(+,-)$ and whose
lines are of the following two forms:
\begin{align}
 \label{+-lines:a} \ell_{U,M_1}  :=  \{(M_1,X)\colon {X\in \fS^-}, ~ M_1\cap X \supset U \}
 \mbox{ with } (U, M_1) \mbox{ a flag of type } (n-2, +);\\
\label{+-lines:b} \ell_{U,M_2} :=  \{(X,M_2)\colon {X\in \fS^+}, ~ X\cap M_2 \supset U \}
 \mbox{ with } (U, M_2) \mbox{ a flag of type } (n-2, -).
\end{align}
Recall that the points of the grassmannian $\Gr_{n-1}(B_n^+(\KK))$ of $B^+_n(\KK)$ are the $(n-1)$-dimensional subspaces of $V_{2n}(\KK)$ totally singular for the quadratic form $q$ and the lines are the sets of the form $\ell_{X,M}  :=\{ Y : X\subset Y \subset M \}$ where $\dim(X)=n-2$, $\dim(M)=n$, $X\subset M$ and $M$ is totally singular. Every point $X$ of $\Gr_{n-1}(B_n^+(\KK))$ is the intersection $X = M_1\cap M_2$ of a unique pair $\{M_1,M_2\}$ of $n$-dimensional totally singular subspaces, which necessarily form a  $(+,-)$--flag of $D_n(\KK)$. Conversely, for every $(+,-)$--flag $(M_1, M_2)$ of $D_n(\KK)$, the intersection $X = M_1\cap M_2$ is a point of $\Gr_{n-1}(B^+_n(\KK))$. A bijecive mapping $\iota$ is thus naturally defined from the set of points of $\Gr_{n-1}(B^+_n(\KK))$ onto the set of points of $\Gr_{+,-}(D_n(\KK))$. The mapping $\iota$ induces a bijection from  the set of lines of $\Gr_{n-1}(B^+_n(\KK))$ onto the set of lines of $\Gr_{+,-}(D_n(\KK))$. In fact, if $\ell_{X,M}$ is a line of $\Gr_{n-1}(B^+_n(\KK))$ then $\iota(\ell_{X,M})$ is the line of $\Gr_{+,-}(D_n(\KK))$ denoted by the very same symbol $\ell_{X,M}$ and it has either form~\eqref{+-lines:a} or~\eqref{+-lines:b} according to whether $M$ belongs to $\fS^+$ or $\fS^-$. To sum up, $\Gr_{n-1}(B_n^+(\KK)) \cong \Gr_{{+,-}}(D_n(\KK))$.

The grassmannian $\Gr_{{1,-}}(D_n(\KK))$ is the point-line geometry where the points are the flags $(p,M)$ of $D_n(\KK)$ of type $(1,-)$ and the lines are as follows:
\begin{align}
 \label{1-lines:a}  &\ell_{p,U} :=\{ (p,X)\colon X\in \fS^-, X \supset U \} \mbox{ with } (p,U) \mbox{ a flag of type } (1, n-2); \\
  \label{1-lines:b} &\ell_{L,M}:=\{ (x,M)\colon \dim(x) = 1, x\subset L\} \mbox{ with } (L,M) \mbox{ a flag of type } (2, -).
\end{align}
The grassmannian $\Gr_{{1,+,-}}(D_n(\KK))$ is the point-line geometry where the points are the flags $(p,M_1,M_2)$ of $D_n(\KK)$ of type $(1,+,-)$;  the lines are as follows:
\begin{align}
   \label{1+-lines:a} &\ell_{L,M_1,M_2} :=\{ (p,M_1,M_2)\colon \dim(p) = 1, p\subset L\}
 \mbox{ with } (L, M_1, M_2) \mbox{ a flag of type }  (2, +, -); \\
  \label{1+-lines:b} &\ell_{p,U,M_1}  :=\{ (p,M_1,X)\colon {{X\in \fS^-}}, X\supset U\}
 \mbox{ with } (p, U, M_1)  \mbox{ a flag  of type } (1, n-2, +);\\
  \label{1+-lines:c}  &\ell_{p,U,M_2} :=\{ (p,X,M_2)\colon {{X\in \fS^+}}, X \supset U\}
  \mbox{ with } (p, U, M_2) \mbox{ a flag of type } (1, n-2, -).
\end{align}

\subsection{The subgeometry $\Gr_J(A_n(\KK_0))$ of $\Gr_J(A_n(\KK))$ for $\KK_0 \leq \KK$}\label{rational1}

Let $E$ be a basis of $V_{n+1}(\KK)$ and $\KK_0$ a sub-division ring of $\KK$. We say that a vector $v\in V_{n+1}(\KK)$ is $\KK_0$-{\em rational respect to} $E$ (also $\KK_0$-{\em rational} for short, when the basis $E$ is clear from the context) if $v$ is a linear combination of vectors of $E$ with coefficients in $\KK_0$. The set of $\KK_0$-rational vectors (with respect to $E$) is a $\KK_0$-vector space, henceforth denoted $V_{n+1, E}(\KK_0)$. For a subspace $X$ of $V_{n+1}(\KK)$, let $X_0 := X\cap V_{n+1,E}(\KK_0)$. Clearly, $X_0$ is a subspace of $V_{n+1,E}(\KK_0)$. We say that $X$ is $\KK_0$-{\em rational with respect to} $E$ (also $\KK_0$-{\em rational} for short) if $X_0$ spans $X$ (in $V_{n+1}(\KK)$); in other words $X$, as a subspace of $V_{n+1}(\KK)$, admits a basis formed by $\KK_0$-rational vectors. If this is the case, then $X$ and $X_0$ have the same dimension (in $V_{n+1}(\KK)$ and $V_{n+1,E}(\KK_0)$ respectively). Indeed the rank of a matrix $M$ with entries in $\KK_0$ does not change if $M$ is regarded as matrix with entries in $\KK$).

Clearly, the sum of two $\KK_0$-rational subspaces of $V_{n+1}(\KK)$ is still $\KK_0$-rational. Similarly,
\begin{lemma}\label{rational An 2}
The intersection of two $\KK_0$-rational subspaces is still $\KK_0$-rational.
\end{lemma}
\begin{proof}
  Let $X_0, Y_0$ be two subspaces of $V_{n+1, E}(\KK_0)$ and $X, Y$ be their spans in $V_{n+1}(\KK)$. Then $X\cap Y$ contains the span $Z$ of $X_0\cap Y_0$ in $V_{n+1}(\KK)$. We must prove that $X\cap Y = Z$. Clearly, $\dim(X) = \dim(X_0)$ and $\dim(Y) = \dim(Y_0)$. Moreover $\dim(X+Y) = \dim(X_0+Y_0)$. Hence
  \begin{multline*}
    \dim(X\cap Y) = \dim(X)+\dim(Y)- \dim(X+Y) = \\  \dim(X_0) + \dim(Y_0)-\dim(X_0+Y_0) = \dim(X_0\cap Y_0) = \dim(Z).
  \end{multline*}
    Therefore $X\cap Y = Z$.
\end{proof}

The following is now obvious:
\begin{proposition}\label{rational An 3}
The $\KK_0$-rational elements of $A_n(\KK)$ form a geometry $A_{n,E}(\KK_0) \cong A_n(\KK_0)$.
\end{proposition}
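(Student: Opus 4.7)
The plan is to show that the $\KK_0$-rational elements of $A_n(\KK)$ are closed under the geometric operations of $A_n$ and then produce an explicit isomorphism with $A_n(\KK_0)$. Closure under sums is immediate from the definition of $\KK_0$-rationality (the union of two $\KK_0$-rational bases $\KK$-spans the sum), while closure under intersections is exactly Corollary \ref{rational An 2}. Therefore the collection of $\KK_0$-rational proper non-trivial subspaces of $V_{n+1}(\KK)$, typed by $\KK$-dimension and incident by symmetrized inclusion, forms a well-defined incidence geometry $A_{n,E}(\KK_0)$.

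For the isomorphism I would define $\phi\colon A_n(\KK_0) \to A_{n,E}(\KK_0)$ by sending a subspace $X_0 \subseteq V_{n+1,E}(\KK_0)$ to its $\KK$-linear span $\phi(X_0)$ inside $V_{n+1}(\KK)$. Surjectivity is the very definition of $\KK_0$-rationality. To get injectivity and preservation of type, I would pick a $\KK_0$-basis $v_1, \ldots, v_k$ of $X_0$ and extend it to a $\KK_0$-basis of $V_{n+1,E}(\KK_0)$, which by construction is simultaneously a $\KK$-basis of $V_{n+1}(\KK)$. Then $\phi(X_0)$ is the $\KK$-span of $v_1, \ldots, v_k$, so $\dim_\KK(\phi(X_0)) = k = \dim_{\KK_0}(X_0)$, and uniqueness of coordinates in the extended basis yields the recovery identity $\phi(X_0) \cap V_{n+1,E}(\KK_0) = X_0$. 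This recovery identity immediately delivers both injectivity of $\phi$ and the nontrivial implication $\phi(X_0) \subseteq \phi(Y_0) \Rightarrow X_0 \subseteq Y_0$; the reverse implication is trivial.

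Since incidence on both sides is symmetrized inclusion, $\phi$ is an isomorphism of incidence geometries, which is the desired conclusion. There is no real obstacle here: the proposition is essentially a repackaging of the preceding lemma and corollary, the key technical fact being that $\KK_0$-dimension equals $\KK$-dimension after scalar extension, as guaranteed by Lemma \ref{rational An 1}. The only point that requires a moment of care is verifying that $\phi$ reflects inclusion, and this is precisely what the recovery identity $\phi(X_0) \cap V_{n+1,E}(\KK_0) = X_0$ gives for free.
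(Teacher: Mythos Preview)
Your proposal is correct and aligns with the paper's approach: the paper does not actually give a proof but simply declares the proposition ``obvious'' from Lemma~\ref{rational An 1} and Corollary~\ref{rational An 2}, and your argument spells out precisely those details (closure under sums and intersections, and the span map $X_0 \mapsto \phi(X_0)$ as the isomorphism, with dimension preservation from Lemma~\ref{rational An 1}).
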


In view of Proposition~\ref{rational An 3}, we can freely identify $A_n(\KK_0)$ with $A_{n, E}(\KK_0)$, thus regarding $A_n(\KK_0)$ as a subgeometry of $A_n(\KK)$. The flags of $A_n(\KK_0)$ are thus identified with the $\KK_0$-{\em rational} flags of $A_n(\KK)$, namely the flags of $A_n(\KK)$ all elements of which are $\KK_0$-rational (with respect to the selected basis $E$ of $V_{n+1}(\KK)$). Accordingly, for $\emptyset \neq J\subseteq \{1,2,\dots, n\}$ the $J$-grassmannian $\Gr_J(A_n(\KK_0))$ of $A_n(\KK_0)$ is identified with the subgeometry $\Gr_{J, E}(A_n(\KK_0))$ of $\Gr_J(A_n(\KK))$ formed by the $\KK_0$-{\em rational} points and lines of $\Gr_J(A_n(\KK))$, namely the points and lines of $\Gr_J(A_n(\KK))$ which are $\KK_0$-rational as flags of $A_n(\KK)$.

Henceforth, by a harmless little abuse, we will always regard $\Gr_J(A_n(\KK_0))$ as the same as $\Gr_{J,E}(A_n(\KK_0))$, thus referring to the span of $\Gr_J(A_n(\KK_0))$ in $\Gr_J(A_n(\KK))$, as we have done in the Introduction, while in fact we mean the span of $\Gr_{J,E}(A_n(\KK_0))$.

The next proposition states that, regarding $\Gr_J(A_n(\KK_0))$ as a subgeometry of $\Gr_J(A_n(\KK))$, the collinearity graph of $\Gr_J(A_n(\KK_0))$ is just the graph induced on its point-set by the collinearity graph of $\Gr_J(A_n(\KK))$.
\begin{proposition}\label{rational An 4}
A line of $\Gr_J(A_n(\KK))$ is $\KK_0$-rational if and only if at least two of its points are $\KK_0$-rational.
\end{proposition}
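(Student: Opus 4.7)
The plan is to rely on the linear-algebraic facts established in Lemma \ref{rational An 1} and Corollary \ref{rational An 2}: both the sum and the intersection of two $\KK_0$-rational subspaces of $V_{n+1}(\KK)$ are again $\KK_0$-rational. Let $\ell$ be a line of $\Gr_J(A_n(\KK))$; by definition $\ell$ is a flag of $A_n(\KK)$ of type $L_j := (J \setminus \{j\}) \cup \mathrm{fr}(j)$ for some $j \in J$. Write $X_i$ for the element of $\ell$ of type $i$ whenever defined. A point $p$ incident with $\ell$ is a $J$-flag agreeing with $\ell$ on every type in $J \setminus \{j\}$ and having a type-$j$ element $Y$ satisfying $X_{j-1} \subset Y \subset X_{j+1}$ (reading the lower bound as $0$ when $j = 1$ and the upper bound as $V_{n+1}(\KK)$ when $j = n$).

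For the forward direction, assume $\ell$ is $\KK_0$-rational, so every $X_i$ is $\KK_0$-rational. In the generic case $1 < j < n$ one extends a $\KK_0$-rational basis $e_1, \ldots, e_{j-1}$ of $X_{j-1}$ (inside $V_{n+1, E}(\KK_0)$) to a $\KK_0$-rational basis $e_1, \ldots, e_{j+1}$ of $X_{j+1}$, which is possible because both subspaces have the expected $\KK_0$-dimension. The two distinct subspaces
\[ Y_1 := \langle e_1, \ldots, e_{j-1}, e_j \rangle, \qquad Y_2 := \langle e_1, \ldots, e_{j-1}, e_{j+1} \rangle \]
are $\KK_0$-rational and satisfy $X_{j-1} \subset Y_i \subset X_{j+1}$, hence complete $\ell$ to two distinct $\KK_0$-rational $J$-flags on $\ell$. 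For $j = 1$, a $\KK_0$-rational basis $e_1, e_2$ of $X_2$ furnishes the $\KK_0$-rational points $\langle e_1 \rangle$ and $\langle e_2 \rangle$; for $j = n$, any two extensions of a $\KK_0$-rational basis of $X_{n-1}$ by different $\KK_0$-rational vectors of $V_{n+1, E}(\KK_0)$ work.

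For the converse, let $p, p'$ be distinct $\KK_0$-rational points on $\ell$, with respective type-$j$ elements $Y, Y'$. The elements of $\ell$ of types in $J \setminus \{j\}$ are shared with $p$ and are already $\KK_0$-rational. Only the elements of $\ell$ of types in $\mathrm{fr}(j) \setminus (J \setminus \{j\})$ remain to be inspected: these are $X_{j-1}$ (if $j > 1$) and $X_{j+1}$ (if $j < n$) when those types do not lie in $J$. Because $Y \neq Y'$ are two distinct $j$-dimensional subspaces with $X_{j-1} \subset Y, Y' \subset X_{j+1}$, dimension counting forces $Y \cap Y' = X_{j-1}$ and $Y + Y' = X_{j+1}$ (with the obvious trimming when $j \in \{1,n\}$). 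Lemma \ref{rational An 1} and Corollary \ref{rational An 2} then yield that $X_{j-1}$ and $X_{j+1}$ are $\KK_0$-rational, so $\ell$ is a $\KK_0$-rational flag.

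No step presents a genuine obstacle; the only nuisance is the bookkeeping of the case distinction between an interior type $j$ and an extremal type $j \in \{1, n\}$, where one of the bounding elements $X_{j \pm 1}$ is missing and is tacitly replaced by the trivial bound.
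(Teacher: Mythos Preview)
Your proof is correct and follows essentially the same route as the paper. For the ``if'' direction both arguments identify the missing elements of the line flag as $Y\cap Y'$ and $Y+Y'$ and invoke Corollary~\ref{rational An 2}; the only cosmetic difference is that you split cases according to whether $j$ is an end-node of $A_n$, whereas the paper splits according to whether $j$ is extremal in $J$. For the ``only if'' direction you spell out an explicit construction of two $\KK_0$-rational points, while the paper simply appeals to the isomorphism $\Gr_{J,E}(A_n(\KK_0))\cong \Gr_J(A_n(\KK_0))$; both are fine.
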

\begin{proof}
The `only if' part of this claim easily follows from the isomorphism of the geometries $\Gr_{J,E}(A_n(\KK_0))\cong \Gr_J(A_n(\KK_0))$. Turning to the `if' part, given $j_0\in  J$, let $L$ be a flag of $A_n(\KK)$ of type $(J\setminus\{j_0\})\cup\mathrm{fr}(j_0)$ and let $P$ and $P'$ be two distinct $J$-flags of $A_n(\KK)$ incident with $L$. We must prove that if both $P$ and $P'$ are $\KK_0$-rational then $L$ is also $\KK_0$-rational. There are three cases to examine: $J$ contains elements $j < j_0$ as well elements $j' > j_0$; $j_0 \leq j$ for every $j\in J$; $j_0\geq j$ for every $j\in J$. We shall examine only the first case, leaving the remaining two (easier) cases to the reader.

With $j_0$ as in the first case, the flag $L$ has type $(J\setminus\{j_0\})\cup\{j_0-1, j_0+1\}$ and contains $Q := P\cap P'$, which is a flag of type $J\setminus \{j_0\}$. Moreover, there are distinct $j_0$-subspaces $S, S'$ of $V_{n+1}(\KK)$ incident with $L$ such that $P = Q\cup\{S\}$ and $P' = Q\cup\{S'\}$. As $S$ and $S'$ are incident with $L$, the elements of $L$ of type $j_0-1$ and $j_0+1$ coincide with $S\cap S'$ and $S+S'$ respectively, namely $L = Q\cup\{S\cap S', S+S'\}$. By assumption, $P$ and $P'$ are $\KK_0$-rational. Hence $Q = P\cap P'$ as well as $S$ and $S'$ are $\KK_0$-rational. If $j_0-1\in J$ then $S\cap S' \in Q$, hence $S\cap S'$ is $\KK_0$-rational. Otherwise $S\cap S'$ is $\KK_0$-rational by Lemma \ref{rational An 2}. Similarly, $S+S'$ is $\KK_0$-rational. Thus, all elements of $L$ are $\KK_0$-rational, namely $L$ is $\KK_0$-rational.
\end{proof}

\subsection{The subgeometry $\Gr_J(D_n(\KK_0))$ of $\Gr_J(D_n(\KK))$ for $\KK_0 \leq \KK$}\label{rational2}

Let $\KK_0$ be a subfield of $\KK$. Let $q:V_{2n}(\KK)\rightarrow \KK$ be the quadratic form considered in Section \ref{models2}. Without loss of generality we can assume to have chosen the basis $E = (e_1,\dots,e_{2n})$ of $V_{2n}(\KK)$ in such a way that $q$ admits the following canonical expression with respect to $E$:
\begin{equation}\label{forma quadratica}
q(x_1,\dots, x_{2n}) ~ = ~ x_1x_2+\dots +x_{2n-1}x_{2n}.
\end{equation}
As in Section~\ref{rational1}, we can consider the $\KK_0$-vector space $V_{2n,E}(\KK_0)$ formed by the $\KK_0$-rational vectors (with respect to $E$).
The form $q$ induces a quadratic form $q_0$ on $V_{2n, E}(\KK_0)$. Clearly, a $\KK_0$-rational subspace $X$ of $V_{2n}(\KK)$ is totally singular for $q$ if and only if $X\cap V_{2n,E}(\KK_0)$ is totally singular for $q_0$. Hence the polar space $B^+_n(\KK_0)$ associated to $q_0$ can be identified with the subgeometry $B^+_{n,E}(\KK_0)$ of $B^+_n(\KK)$ formed by the $\KK_0$-rational subspaces of $V_{2n}(\KK)$ which are totally singular for $q$. Similarly, $D_n(\KK_0)$ can be identified with the subgeometry $D_{n,E}(\KK_0)$ of $D_n(\KK)$ formed by the $\KK_0$-rational elements of $D_n(\KK)$.

A flag of $D_n(\KK)$ is $\KK_0$-{\em rational} if all of its elements are $\KK_0$-rational (with respect to $E$, of course).  Given a nonempty subset $J$ of the type-set $\{1, 2,\dots, n-2, +, -\}$ of $D_n(\KK)$, a point or a line of $\Gr_J(D_n(\KK))$ are said to be $\KK_0$-{\em rational} if they are $\KK_0$-rational as flags of $D_n(\KK)$. The $\KK_0$-rational points and lines of $\Gr_J(D_n(\KK))$ form a subgeometry $\Gr_{J,E}(D_n(\KK_0))$ of $\Gr_J(D_n(\KK))$ isomorphic to $\Gr_J(D_n(\KK_0))$. An analogue of Proposition \ref{rational An 4} also holds:

\begin{proposition}\label{rational Dn}
A line of $\Gr_J(D_n(\KK))$ is $\KK_0$-rational if and only if at least two of its points are $\KK_0$-rational.
\end{proposition}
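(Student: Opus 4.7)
The plan is to mirror the proof of Proposition \ref{rational An 4}, with one $D_n$-specific ingredient: the parity constraint on intersections of $n$-dimensional totally singular subspaces lying in the same class $\fS^+$ or $\fS^-$.

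For the $(\Rightarrow)$ direction, the isomorphism $\Gr_{J,E}(D_n(\KK_0))\cong \Gr_J(D_n(\KK_0))$ exhibits a $\KK_0$-rational line $\ell$ as a line of $\Gr_J(D_n(\KK_0))$; such a line carries at least $|\KK_0|+1\ge 3$ points, each being a $\KK_0$-rational $J$-flag of $D_n(\KK)$, so at least two $\KK_0$-rational points lie on $\ell$.

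For $(\Leftarrow)$, I would fix a line $\ell$ and two distinct $\KK_0$-rational points $P\neq P'$ on it. The flag $\ell$ splits into the elements common to $P$ and $P'$, which are $\KK_0$-rational by hypothesis, together with one extra element $\xi$ whose type lies in $\mathrm{fr}(j_0)\setminus J$, where $j_0\in J$ is the unique type at which $P$ and $P'$ differ. Inspecting the enumerations \eqref{+ - lines}, \eqref{1-lines} and \eqref{1+-lines}, the element $\xi$ has type $2$ (when $j_0=1$) or type $n-2$ (when $j_0\in\{+,-\}$). In the first case $\xi$ is a totally singular $2$-subspace containing the two distinct $\KK_0$-rational $1$-subspaces provided by $P$ and $P'$, so $\xi$ is their sum, hence $\KK_0$-rational. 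In the second case $P$ and $P'$ produce two distinct $\KK_0$-rational $n$-dimensional totally singular subspaces $X,X'$ lying in a common class $\fS^\varepsilon$ and both containing $\xi$; since $X\sim X'$, the codimension of $X\cap X'$ in $X$ is even and nonzero, and combined with $X\cap X'\supseteq\xi$ of dimension $n-2$ this forces the codimension to equal $2$, whence $X\cap X'=\xi$. Then $\xi$ is $\KK_0$-rational by the analogue of Corollary \ref{rational An 2} inside $V_{2n}(\KK)$.

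The substantive obstacle is the type-$(n-2)$ case: unlike the sum-based recovery available in the type-$2$ case, here $X\cap X'$ could a priori be strictly larger than $\xi$ (of dimension $n-1$), and we would then be unable to conclude the $\KK_0$-rationality of $\xi$ from that of $X$ and $X'$ alone. The parity characterization of the equivalence $\sim$ on $n$-dimensional totally singular subspaces is precisely what excludes the dimension $n-1$ intersection and forces $X\cap X'=\xi$; once this is in hand, the rest of the proof is a routine case check parallel to Proposition \ref{rational An 4}.
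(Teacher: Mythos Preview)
Your argument is correct and coincides with the paper's for the three sets $J\in\{\{+,-\},\{1,-\},\{1,+,-\}\}$ whose line descriptions you cite; the parity trick forcing $X\cap X'=\xi$ when $j_0\in\{+,-\}$ is exactly the device the paper isolates. However, Proposition~\ref{rational Dn} is stated for \emph{arbitrary} $J\subseteq\{1,\dots,n-2,+,-\}$, and by appealing only to \eqref{+ - lines}--\eqref{1+-lines} you have silently restricted yourself. In particular your assertion that the line carries ``one extra element $\xi$'' is false in general: $\mathrm{fr}(j_0)\setminus J$ may well contain several types.

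The case genuinely missing from your proof (and treated separately in the paper) is $j_0=n-2$ with $+,-\notin J$. Here $\mathrm{fr}(n-2)=\{n-3,+,-\}$, so the line $L$ contains elements $M_1\in\fS^+$, $M_2\in\fS^-$ (and, when $n>3$ and $n-3\notin J$, also an element $R$ of type $n-3$), none of which lies in $P\cap P'$. The flags $P,P'$ supply distinct $\KK_0$-rational $(n-2)$-elements $S,S'$, both contained in $M_1\cap M_2$. Since $M_1\not\sim M_2$ one has $\dim(M_1\cap M_2)=n-1$, whence $S+S'=M_1\cap M_2$ is $\KK_0$-rational; but to conclude that $M_1$ and $M_2$ themselves are $\KK_0$-rational a further step is needed: $M_1\cap M_2$ is then an $(n-1)$-element of $B^+_{n,E}(\KK_0)$, hence sits in a unique $\{+,-\}$-flag of $D_{n,E}(\KK_0)\subset D_n(\KK)$, and by uniqueness of the pair of maximal totally singular subspaces through a given $(n-1)$-space over $\KK$ that flag must equal $(M_1,M_2)$. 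Your parity argument does not apply here (the two $n$-spaces lie in opposite classes), and nothing in your outline supplies this step.
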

\begin{proof}
This statement can be proved in the same way as Proposition \ref{rational An 4} but for a couple of cases in the proof of the `only if' part, which we shall now discuss.

\medskip

\noindent
1. Suppose that $J$ contains at least one of the types $+$ and $-$, say $+ \in J$. Suppose moreover that $n-2\not\in J$. Let $L$ be a flag of $D_n(\KK)$ of type $(J\setminus\{+\})\cup\mathrm{fr}(+) = (J\setminus\{+\})\cup\{n-2\}$ and let $P, P'$ be distinct $\KK_0$-rational flags of type $J$, both incident with $L$. Then $Q = P\cap P'$ is a $\KK_0$-rational flag, $P = Q\cup\{M\}$ and $P' = Q\cup\{M'\}$ for distinct $\KK_0$-rational element $M, M'\in \fS^+$. Also, $L = Q\cup S$ for an $(n-2)$-element $S$ incident with $Q$. We have $S\subseteq M\cap M'$ since $P$ and $P'$ are incident with $L$. However, $\mathrm{dim}(M\cap M')$ has even codimension in $M$ and $M'$, since $M$ and $M'$ belong to the same family of $n$-elements of $B^+_n(\KK)$, namely $\fS^+$. Therefore $S = M\cap M'$. Hence $S$ is $\KK_0$-rational by Lemma \ref{rational An 2}. Thus, $L$ is $\KK_0$-rational.

\medskip

\noindent
2. The set $J$ contains none of the types $+$ or $-$ but it contains $n-2$. To fix ideas, suppose that $n > 3$. Let $L$ be a flag of $D_n(\KK)$ of type $(J\setminus\{n-2\})\cup\mathrm{fr}(n-2) = (J\setminus\{n-2\})\cup\{n-3,+,-\}$ and let $P, P'$ be distinct $\KK_0$-rational flags of type $J$, both incident with $L$. Then $Q = P\cap P'$ is a $\KK_0$-rational flag, $P = Q\cup\{S\}$ and $P' = Q\cup\{S'\}$ for distinct $\KK_0$-rational $(n-2)$-elements $S, S'$ of $D_n(\KK)$ and $L = Q\cup\{R, M_1, M_2\}$ for an $(n-3,+,-)$--flag $(R, M_1, M_2)$ incident with $Q$. As both $P$ and $P'$ are incident with $L$, the sum $S+S'$ is contained in $M\cap M'$. However $\dim(M\cap M) = n-1$ while $\dim(S+S')\geq n-1$ since $S\neq S'$. Consequently, $M\cap M' = S+S'$. On the other hand, $S+S'$ is a $\KK_0$-rational subspace of $V_{2n}(\KK)$, since both $S$ and $S'$ are $\KK_0$-rational. Hence $M\cap M'$ is $\KK_0$-rational. Therefore $M\cap M'$ is an $(n-1)$-element of $B^+_{n,E}(\KK_0) = \Gr_1(D_{n,E}(\KK_0))$. Accordingly, $M\cap M' = M_0\cap M'_0$ for a $(+,-)$--flag $(M_0, M'_0)$ of $D_{n,E}(\KK_0)$. On the other hand, all $(+,-)$--flags of $D_{n,E}(\KK_0)$ are $(+,-)$--flags of $D_n(\KK)$ too and two $(+,-)$--flags $(M, M')$ and $(M_0, M'_0)$ of $D_n(\KK)$ coincide if $M\cap M' = M_0\cap M'_0$. It follows that $M = M_0$ and $M' = M_0$, namely both $M$ and $M'$ are $\KK_0$-rational. It remains to prove that $R$ too is $\KK_0$-rational. If $n-3 \in J$ then $R\in Q$ and there is nothing to prove. Otherwise $R = S\cap S'$. Hence $R$ is $\KK_0$-rational by Lemma \ref{rational An 2}. Therefore $L$ is $\KK_0$-rational.

We have assumed that $n > 3$. When $n = 3$ we have $J = \{n-2\}$ and $L = (M_1, M_2)$, of type $(+,-)$; we get the conclusion as above, but now with no $R$ to take care of.
\end{proof}

All we have said for $D_n(\KK_0)$ and $\Gr_J(D_n(\KK_0))$ in this section  holds for $B_n^+(\KK_0)$ and $\Gr_J(B^+_n(\KK_0))$ as well.

\section{Proof of Theorems \ref{m-th1} and \ref{non-interval}}\label{pro}

For $X_n$ equal to $A_n$ or $D_n$ and a nonempty set of types $J$, let $\Gamma(\KK):=\Gr_J(X_n(\KK))$ and $\Gamma(\KK_0) := \Gr_J(X_n(\KK_0))$ be its $\KK_0$-rational subgeometry for a proper sub-division ring $\KK_0$ of $\KK$ (Sections \ref{rational1} and \ref{rational2}).

\begin{definition}\label{splits}
We say that a node $t$ of $X_n$ {\em splits} $J$ if $t\not \in J$ and $J$ is not contained in one single connected component of $X_n\setminus \{t\}$. In other words, $t$ separates at least two of the types of $J$.
\end{definition}

\begin{definition}\label{Omega}
We say that a $J$-flag $F$ (point of $\Gamma(\KK)$) is {\em nearly $\KK_0$-rational} if either at least one of its elements is $\KK_0$-rational or there exists a $\KK_0$-rational element of $X_n(\KK)$ incident with $F$ and such that its type splits $J$. We denote by $\Omega_{\KK_0}(\Gamma(\KK))$ the set of all nearly $\KK_0$-rational points of $\Gamma(\KK)$.
\end{definition}
Obviously, $\Gamma(\KK_0) \subseteq \Omega_{\KK_0}(\Gamma(\KK))$. We shall prove the following:
\begin{theorem}\label{m-th3}
If $\Gamma(\KK)$ is $\Gr_{{1,n}}(A_n(\KK))$, $\Gr_{{1,-}}(D_n(\KK))$, $\Gr_{{1,+,-}}(D_n(\KK))$ or $\Gr_{+,-}(D_n(\KK))$ then $\Omega_{\KK_0}(\Gamma(\KK))$ is a proper subspace of $\Gamma(\KK)$.
\end{theorem}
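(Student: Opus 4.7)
The plan is to prove the two properties separately: that $\Omega := \Omega_{\KK_0}(\Gamma(\KK))$ is closed under taking lines (so is a subspace) and that it is strictly smaller than $\Gamma(\KK)$.

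For closure under lines, I would perform a case analysis on the type of line $\ell$ of $\Gamma(\KK)$, showing that if two distinct points $P_1, P_2 \in \ell$ are nearly $\KK_0$-rational, then so is every $P \in \ell$. Each line has a ``core'': the set of elements of $X_n(\KK)$ incident with all points of $\ell$. For instance, the core of $\ell_{p, S}$ in $\Gr_{1,n}(A_n(\KK))$ is $\{p, S\}$, with $S = X_1 \cap X_2$ for any two distinct points $(p, X_1), (p, X_2) \in \ell$; the core of $\ell_{U, M_1}$ in $\Gr_{+,-}(D_n(\KK))$ is $\{U, M_1\}$ with $U = X_1 \cap X_2$; analogous identities hold for the other line types. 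The strategy in each case is to produce a rational element (of splitting type, or belonging to $P_1 \cap P_2$) inside the core of $\ell$, which then witnesses the near-rationality of every $P \in \ell$. Splitting into sub-cases according to which clause of Definition \ref{Omega} each $P_i$ satisfies, for $\Gr_{1,n}(A_n(\KK))$, $\Gr_{1,-}(D_n(\KK))$, and $\Gr_{1,+,-}(D_n(\KK))$ the argument is relatively direct: by Corollary \ref{rational An 2} and Lemma \ref{rational An 1}, the intersection (or sum) of the two witnesses is rational, and a dimension count shows it has dimension in the splitting-type range and lies in the core of $\ell$, unless some element of $P_1 \cap P_2$ is itself forced to be rational (the easy case).

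The $\Gr_{+,-}(D_n(\KK))$ case is the main obstacle since $n-2$ is the only splitting type. The key leverage is that the form $q$ is fixed in canonical split form $q = x_1 x_2 + \ldots + x_{2n-1} x_{2n}$ with respect to $E$ (Section \ref{rational2}), so $q$ is \emph{rationally split}. It follows that for any rational totally singular subspace $Q \subset V_{2n}(\KK)$ of dimension $n-1$, the two totally singular lines in $Q^\perp/Q$ are both rational, hence both maximal singular extensions of $Q$ (one in $\fS^+$, one in $\fS^-$) are rational; equivalently, a non-rational maximal singular subspace cannot contain a rational codimension-$1$ totally singular subspace. With this rigidity, the hard sub-case to dispatch is: $M_1$ not rational, $X_1$ rational, and $V_2 \subset p_2 = M_1 \cap X_2$ a rational $(n-2)$-dimensional witness for $P_2$ with $V_2 \neq U$. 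Here the rational $(n+1)$-dimensional subspace $X_1 + V_2$ contains $M_1$ and has exactly two maximal totally singular subspaces of dimension $n$, namely $X_1$ and $M_1$; the second is defined in $X_1 + V_2$ by the rational linear equation $\sum_i b(x_i, v)\, a_i = 0$ (with $(x_i)$ a rational basis of $X_1$ and $v \in V_2 \setminus U$ rational), which forces $M_1$ rational, contradicting the hypothesis. An analogous argument (using $V_1 + V_2$) handles the sub-case of two clause-(b) witnesses. Hence in all sub-cases either some element of $P_1 \cap P_2$ is rational, or $V_i = U$ so $U$ is rational; then $U$ witnesses every $P \in \ell$.

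For properness, I would exhibit $F \notin \Omega$ by picking $\alpha \in \KK \setminus \KK_0$ and choosing an ``$\alpha$-twisted'' flag, for instance $p = \langle e_1 + \alpha e_2 \rangle$ together with a compatible hyperplane in $\Gr_{1,n}(A_n(\KK))$, such that no element of $F$ is rational and any rational incident element of splitting type would force a $\KK_0$-linear relation on $\alpha$, contradicting $\alpha \notin \KK_0$. The constructions for the $D_n$ geometries are analogous, using the canonical rational form of $q$.
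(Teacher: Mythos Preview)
Your two-part structure (subspace, then properness) matches the paper's Lemmas~\ref{Omega-geo} and~\ref{Omega-proper}, and for $\Gr_{1,n}(A_n)$, $\Gr_{1,-}(D_n)$, $\Gr_{1,+,-}(D_n)$ your intersection/sum argument is exactly what the paper does. For the hard $\Gr_{+,-}(D_n)$ sub-case (fixed element $M_1$ non-rational, $X_1$ rational, rational witness $V_2\neq U$ for the other point) your route is genuinely different and cleaner than the paper's: you observe that $S:=X_1+V_2$ is a rational $(n{+}1)$-space whose only two $n$-dimensional totally singular subspaces are $X_1$ and $M_1$, and that $M_1$ is cut out of $S$ by the rational equation $b(\,\cdot\,,v)=0$ for a rational $v\in V_2\setminus U$, forcing $M_1$ rational. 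The paper instead sets $W=X_1\cap V_2$, passes to the quotient $W^{\perp}/W$ (a $D_3$-building), and then invokes the Klein correspondence and Lemma~\ref{exterior} to reach the same contradiction. Your argument avoids that detour entirely; one small point is that your equation should be read in coordinates for the extended basis $(x_1,\dots,x_n,v)$ of $S$, not just of $X_1$, so that it really defines a hyperplane of $S$.

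The gap is in the properness half. Your assertion that a rational incident element of splitting type ``would force a $\KK_0$-linear relation on $\alpha$'' is precisely the content of the paper's Lemma~\ref{lemma32} (a rational subspace containing $e_1+e_2\alpha$ must contain both $e_1$ and $e_2$), whose proof is not a one-liner: it requires a careful Gaussian-reduction argument over $\KK_0$. You also need to choose the rest of the flag so that this forced containment actually yields a contradiction; e.g.\ in $\Gr_{1,n}(A_n)$ the hyperplane $H$ must avoid $e_1$ and $e_2$, not merely be non-rational. Finally, ``analogous'' undersells the $\Gr_{+,-}(D_n)$ case: there the only splitting type is $n-2$, and one must build $(M_1,M_2)$ so that every rational subspace of $M_1\cap M_2$ has dimension at most $n-3$; the paper's explicit construction (using two independent twists $e_5+e_7\eta$, $e_6\eta-e_8$ in addition to a rational part) shows this is more delicate than a single $\alpha$-twist.
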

Theorem \ref{m-th1} then immediately follows from Theorem \ref{m-th3} and the inclusion $\Gamma(\KK_0) \subseteq \Omega_{\KK_0}(\Gamma(\KK))$.

\subsection{Proof of Theorem \ref{m-th3}}

We need a preliminary result from multi-linear algebra, to be exploited later, when discussing the case $\Gamma(\KK) = \Gr_{+,-}(D_n(\KK))$.

\begin{lemma}\label{exterior}
Suppose that $\KK$ is a field and let $V := V_4(\KK)$. Given a basis $E = (e_1, e_2, e_3, e_4)$ of $V$, let $E\wedge E = (e_i\wedge e_j)_{i < j}$ be the corresponding basis of the second exterior power $V\wedge V$ of $V$. Then all the following hold:
\begin{enumerate}[(1)]
\item\label{ee:1} The span $\langle v, w\rangle$ of two independent vectors $v, w \in V$ is $\KK_0$-rational with respect to $E$ if and only if $v\wedge w$ is proportional to a vector of $V\wedge V$ which is $\KK_0$-rational with respect to $E\wedge E$.
\item\label{ee:2} A non-zero vector $v\in V$ is proportional to a $\KK_0$-rational vector if and only if the subspace $S_v := \langle v\wedge x\rangle_{x\in V}$ of $V\wedge V$ is $\KK_0$-rational with respect to $E\wedge E$.
\item\label{ee:3} The span $\langle u, v, w\rangle$ of three independent vectors $u, v, w\in V$ is $\KK_0$-rational (with respect to $E$) if and only if $\langle u\wedge v, u\wedge w, v\wedge w\rangle$ is $\KK_0$-rational with respect to $E\wedge E$.
\end{enumerate}
\end{lemma}
\begin{proof}
\begin{enumerate}[(1)]
\item Without loss of generality, we can assume  $v = e_1 + e_3a_3+e_4a_4$ and $w = e_2 + e_3b_3+ e_4b_4$ for $a_3, a_4, b_3, b_4 \in \KK$. Hence
$v\wedge w = e_{1,2} + e_{1,3}b_3 + e_{1,4}b_4 - e_{2,3}a_3 - e_{2,4}a_4 + e_{3,4}(a_3b_4-a_4b_3)$, where we write $e_{i,j}$ for $e_i\wedge e_j$.
Both parts of (\ref{ee:1}) are equivalent to the following single claim: $a_3, a_4, b_3, b_4 \in \KK_0$. Hence they are mutually equivalent.
\item
  Without loss of generality, we can assume that $v = e_1 + e_2a_2+e_3a_3 + e_4a_4$. Hence $S_v := \langle v\wedge e_2, v\wedge e_3,v\wedge e_4\rangle$. We have
\[v\wedge e_2 = e_{1,2} - e_{2,3}a_3 - e_{2,4}a_4, \hspace{2 mm} v\wedge e_3 = e_{1,3} + e_{2,3}a_2 - e_{3,4}a_4, \hspace{2 mm} v\wedge e_4 = e_{1, 4} + e_{2,4}a_2 + e_{3,4}a_3,\]
with $e_{i,j} := e_i\wedge e_j$, as above. Both parts of (\ref{ee:2}) are thus equivalent to this: $a_2, a_3, a_4\in \KK_0$. So Claim~(\ref{ee:2}) is proved.
\item
  Without loss of generality, we can assume that $u = e_1+e_4a$, $v = e_2+e_4b$ and $w = e_3+e_4c$. Hence
$u\wedge v = e_{1,2} + e_{1,4}b - e_{2,4}a$, $u\wedge w = e_{1,3} - e_{1,4}c - e_{3,4}a$ and $v\wedge w = e_{2,3} + e_{2,4}c - e_{3,4}b$.
Both parts of~(\ref{ee:3}) are equivalent to this: $a, b, c\in \KK_0$. Claim~(\ref{ee:3}) follows.
\end{enumerate}
\end{proof}

\begin{lemma}\label{Omega-geo}
If $\Gamma(\KK)$ is as in the hypotheses of Theorem {\rm \ref{m-th3}} then the set $\Omega_{\KK_0}(\Gamma(\KK))$ is a subspace of $\Gamma(\KK)$.
 \end{lemma}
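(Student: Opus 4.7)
The plan is to verify the subspace condition directly: given a line $\ell$ of $\Gamma(\KK)$ and two distinct nearly $\KK_0$-rational points $P_1, P_2 \in \ell$, show that every third $P_3 \in \ell$ is also nearly $\KK_0$-rational. Reading the line descriptions (\ref{An lines}), (\ref{+ - lines}), (\ref{1-lines}) and (\ref{1+-lines}), each line $\ell$ has a \emph{kernel} of elements shared by every flag on $\ell$, together with one varying element of type in $J$; a case-by-case inspection of the four Grassmannians of Theorem \ref{m-th3} shows that the kernel of $\ell$ always contains at least one element of a type in $J$ and at least one whose type splits $J$ in the sense of Definition \ref{splits}.

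First I would dispose of the easy case that some kernel element of $\ell$ is already $\KK_0$-rational: if its type lies in $J$, that element appears in every flag on $\ell$ and witnesses each point of $\ell$ via the first clause of Definition \ref{Omega}; if its type splits $J$, it is a $\KK_0$-rational splitting-type element incident with every flag on $\ell$, witnessing each point via the second clause. Henceforth I assume no kernel element of $\ell$ is $\KK_0$-rational. Under this hypothesis, each $P_i$ ($i=1,2$) is witnessed as nearly $\KK_0$-rational either by \emph{(a)} the $\KK_0$-rationality of its varying element, or by \emph{(b)} an external $\KK_0$-rational splitting-type element $Z_i$ incident with $P_i$. The possibility that both $P_1, P_2$ use \emph{(a)} is ruled out uniformly: in every line type, a suitable sum or intersection of the two distinct varying elements recovers a kernel element --- e.g.\ $H_1 \cap H_2 = S$ on a line $\ell_{p,S}$ of $\Gr_{1,n}(A_n(\KK))$, $x_1 + x_2 = L$ on $\ell_{L,H}$, or $X_1 \cap X_2 = U$ on $\ell_{U,M_1}$ of $\Gr_{+,-}(D_n(\KK))$ --- and by Corollary \ref{rational An 2} that kernel element would then be $\KK_0$-rational, contradicting our hypothesis.

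In the remaining subcases I combine the available $\KK_0$-rational data $(V_1, Z_2)$ or $(Z_1, Z_2)$ (where $V_1$ denotes the rational varying element of $P_1$ in case \emph{(a)}) by an appropriate intersection or sum: concretely $Z := Z_1 \cap Z_2$ or $V_1 \cap Z_2$ on lines whose kernel features an element of low type (e.g.\ $\ell_{p,S}$), and $Z := Z_1 + Z_2$ or $V_1 + Z_2$ on lines whose kernel features an element of high type (e.g.\ $\ell_{L,H}$). Corollary \ref{rational An 2} and Proposition \ref{rational Dn} guarantee that the resulting $Z$ is $\KK_0$-rational, and a short dimension count confirms that $Z$ has splitting type and is incident with every flag on $\ell$; the boundary configurations in which $Z$ would otherwise acquire the ``wrong'' dimension collapse $Z$ onto a kernel element, whose ensuing $\KK_0$-rationality again contradicts our hypothesis. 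For the $D_n$ Grassmannians one additionally uses the $\KK_0$-rationality of the quadratic form $q$ in the chosen basis (cf.\ Section \ref{rational2}) to exclude the intermediate totally singular dimension $n-1$: any $\KK_0$-rational totally singular $(n-1)$-subspace would force both of the maximal totally singular subspaces containing it to be $\KK_0$-rational as well, again a contradiction. This completes the argument for $\Gr_{1,n}(A_n(\KK))$, $\Gr_{1,-}(D_n(\KK))$ and $\Gr_{1,+,-}(D_n(\KK))$.

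I expect the genuinely hard case to be $\Gr_{+,-}(D_n(\KK))$: here the splitting set consists of the single type $n-2$, witnesses must be totally singular subspaces of exactly that dimension, and the only $(n-2)$-element incident with every flag on the line $\ell_{U,M_1}$ is $U$ itself. No uniform witness for all of $\ell$ is available, so for each individual $P_3 = (M_1, X_3)$ with $X_3$ not $\KK_0$-rational one must fabricate a separate $\KK_0$-rational totally singular $(n-2)$-subspace of $M_1 \cap X_3$ from the data furnished by $P_1, P_2$. The plan for this step is a local reduction: passing to the residue of an appropriately chosen $\KK_0$-rational subflag (ultimately a $D_3 \cong A_3$-residue), Lemma \ref{exterior} supplies the Klein-correspondence translation that converts the $(n-2)$-witness question into a point-hyperplane-flag question in $\Gr_{1,3}(A_3(\KK))$, settled by the $A_n$ argument above.
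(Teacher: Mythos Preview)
Your proposal is correct and follows essentially the same route as the paper: in each Grassmannian you combine the two given witnesses by sum or intersection (Corollary~\ref{rational An 2}), use the $(n-1)$-dimensional totally singular forcing argument in the $D_n$ cases, and reduce the hard $\Gr_{+,-}$ subcase to $D_3\cong A_3$ via a $\KK_0$-rational quotient and Lemma~\ref{exterior}. Your ``kernel'' framework and the (a)/(b) witness dichotomy are a cleaner reorganisation of the paper's explicit case split, but the underlying computations are identical.

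One point is worth flagging. In your final paragraph you assert that for $\Gr_{+,-}(D_n(\KK))$ ``no uniform witness for all of $\ell$ is available'' and plan to manufacture a separate $(n-2)$-witness for each $P_3$ by lifting the $A_3$ subspace conclusion from the quotient $W^\perp/W$. That does work, but it is strictly more effort than needed: carrying out the very reduction you describe (with $W$ equal to the intersection of the rational varying element with the rational $(n-2)$-witness of the other point) actually shows that the fixed kernel element of type $\pm$ is forced to be $\KK_0$-rational, contradicting your standing hypothesis. In other words, the mixed (a,b) subcase is vacuous, just like the (a,a) and (b,b) subcases, so a uniform witness (namely $U$ or the fixed $M_i$) is always available after all. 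The paper takes this shorter contradiction route; your per-$P_3$ lifting is a valid alternative but obscures the fact that the awkward subcase never occurs.
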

  \begin{proof}
We must show that, for any two nearly $\KK_0$-rational collinear points $F, F'$ of $\Gamma(\KK)$, the line $\langle F, F'\rangle_{\Gamma(\KK)}$ is fully contained in $\Omega_{\KK_0}(\Gamma(\KK))$.
There are several cases to consider:
\begin{enumerate}[1)]
\item\label{omega-geo:1}  $\Gamma(\KK)=\Gr_{1,n}(A_n(\KK))$. Let $F =(p,H)$ and $F'=(p',H')$ be two distinct collinear points of $\Gamma(\KK)$, namely two point-hyperplane flags with either $p \neq p'$ and $H = H'$ or $p = p'$ but $H \neq H'$. Suppose moreover that $F$ and $F'$ are nearly $\KK_0$-rational.
\begin{enumerate}[(a)]
\item\label{1:a} Let $p = p'$ and $H\neq H'$. Now $\langle F, F'\rangle_{\Gamma(\KK)} = \ell_{p, S}=\{(p,X)\colon X \supset S, \dim(X)=n\}$
 where $S= H\cap H' \supset p$ is a sub-hyperplane containing $p$. By assumption, there exist $\KK_0$-rational subspaces $U_0$, $U_0'$ of $V_{n+1}(\KK)$ such that $p\subseteq U_0\subseteq  H$ and $p\subseteq U_0'\subseteq  H'$. The subspace $U_0\cap U_0'$ is $\KK_0$-rational by Lemma \ref{rational An 2}, it contains $p$ and is contained in $S$. Hence it is contained in every hyperplane $X \supset S$. As $U_0\cap U_0'$ is $\KK_0$-rational, the flag $(p,X)$ is nearly $\KK_0$-rational for every hyperplane $X\subset S$, namely $\ell_{p,S} \subseteq \Omega_{\KK_0}(\Gamma(\KK))$.
\item\label{1:b} Let $H = H'$ but $p \neq p'$. Then $\langle F, F'\rangle_{\Gamma(\KK)}=\ell_{L, H}=\{(x, H)\colon x\subset L, \dim(x)=1\}$ where $L = p+p' \subset H$ is the span of $p\cup p'$ in $V_{n+1}(\KK)$. The argument used in case (a) above can be dualized as follows. By assumption, there exist $\KK_0$-rational subspaces $U_0, U'_0$ of $V_{n+1}(\KK)$ such that $p\subseteq U_0\subseteq H$ and $p'\subseteq U'_0 \subseteq H'$. Clearly, $L \subseteq U_0+U'_0\subseteq H$. Hence $x\subseteq U_0+U'_0 \subseteq H$ for every $1$-subspace $x$ of $L$. However $U_0+U'_0$ is $\KK_0$-rational. Therefore $(x,H)$ is nearly $\KK_0$-rational. It follows that $\ell_{L,H}\subseteq \Omega_{\KK_0}(\Gamma(\KK))$.
\end{enumerate}

\item\label{omega-geo:2} $\Gamma(\KK)=\Gr_{{1,-}}(D_n(\KK))$. Let $F =(p,M)$ and $F'=(p',M')$ be two collinear points of $\Gr_{{1,-}}(D_n(\KK))$. Since $F$ and  $F'$ are collinear, either $p = p'$ or $M = M'$. The line $\langle F, F'\rangle_{\Gamma(\KK)}$ is as in \eqref{1-lines:a} or \eqref{1-lines:b}  according to whether $p = p'$ or $M = M'$. When $p = p'$ then  the same argument as in (a) of \ref{omega-geo:1}) does the job, with the only change that $M\cap M'$, which now plays the role of $H\cap H'$, has dimension $n-2$ instead of $n-1$. If $M = M'$ then an argument similar to that used for (b) of
  \ref{omega-geo:1}) yields the conclusion. We leave the details to the reader.

\item\label{omega-geo:3} $\Gamma(\KK)=\Gr_{1,+,-}(D_n(\KK))$. Let $F = (p, M_1, M_2)$ and $F' = (p', M'_1, M_2')$ be two collinear points of $\Gamma(\KK)$ and suppose they both are nearly $\KK_0$-rational. Two subcases can occur:
\begin{enumerate}[(a)]
\item $M_i = M'_i$ for $i = 1, 2$. If at least one of the $n$-spaces $M_1$ and $M_2$ is $\KK_0$-rational, there is nothing to prove. Suppose that neither of them is $\KK_0$-rational. Then, since $F$ and $F'$ are nearly $\KK_0$-rational by assumption, there are $\KK_0$-rational subspaces $U_0$ and $U_0'$ with $p\subseteq U_0\subset M_1\cap M_2$ and $p'\subseteq U_0'\subset M_1\cap M_2$.  We have $\langle F, F'\rangle_{\Gamma(\KK)} = \ell_{L,M_1, M_2}$ as in  \eqref{1+-lines:a} with $L = p+p'$. The sum $U_0+U'_0$ is a $\KK_0$-rational subspace of $V_{2n}(\KK)$ and contains $L$.

  If $\dim(U_0+U'_0) < n-1$ then $U_0+U'_0$ is a $\KK_0$-rational element of $D_n(\KK)$ incident with the flag $(L, M_1, M_2)$, which corresponds to the line $\ell_{L,M_1,M_2}$. As in (b) of \ref{omega-geo:1}), it follows that all points of $\ell_{L,M_1,M_2}$ are nearly $\KK_0$-rational.

 If $\dim(U_0+U'_0) > n-2$ then necessarily $U_0+U'_0 = M_1\cap M_2$. In this case $U_0+U_0'$ is not an element of $D_n(\KK)$, but it is a $\KK_0$-rational $(n-1)$-element of $B^+_n(\KK)$, hence an $(n-1)$-element of the subgeometry $B^+_n(\KK_0)$ of $B^+_n(\KK)$. As such, $U_0+U'_0$ is contained in just two $n$-elements $N_1$ and $N_2$ of $B^+_n(\KK_0)$. However $N_1$ and $N_2$ also belong to $B^+_n(\KK)$. In fact, they are the unique two $n$-elements of $B^+_n(\KK)$ which contain $U_0+U'_0$. On the other hand, $U_0+U'_0$ is contained in $M_1$ and $M_2$. Therefore $\{M_1, M_2\} = \{N_1, N_2\}$. However $N_1$ and $N_2$ are $\KK_0$-rational. Hence $M_1$ and $M_2$ are $\KK_0$-rational, contrary to our assumptions. We have reached a contradiction. The proof is complete, as far as the present subcase is concerned.

\item Let $p=p'$, $M_i = M'_i$ but $M_j \neq M'_j$, for $\{i,j\} = \{1, 2\}$. To fix ideas, assume that $M_1 = M'_1$ and $M_2\neq M'_2$. If $M_1$ or $p$ are $\KK_0$-rational, then there is nothing to prove. Suppose that neither $M_1$ nor $p$ are $\KK_0$-rational. Recalling that $F$ and $F'$ are nearly $\KK_0$-rational, one of the following occurs:
\begin{enumerate}[(b1)]
\item\label{3:b1} There are $\KK_0$-rational subspaces $U_0$, $U_0'$ of dimension at most $n-2$ such that $p\subseteq U_0\subset M_1\cap M_2$ and $p\subseteq U_0'\subset M_1\cap M'_2$.
\item\label{3:b2} Just one of $M_2$ and $M'_2$ is $\KK_0$-rational. To fix ideas, let $M'_2$ be the $\KK_0$-rational one. Then there exists a $\KK_0$-rational subspace $U_0$ of dimension $\dim(U_0) \leq n-2$ such that $p\subseteq U_0\subset M_1\cap M_2$.
\item\label{3:b3} Both $M_2$ and $M'_2$ are $\KK_0$-rational.
\end{enumerate}

In subcases (b1) and (b2) we can consider the element $U_0\cap U'_0$ or $U_0\cap M'_2$ respectively. This element contains $p$ and is $\KK_0$-rational by Lemma \ref{rational An 2}. So, we get the conclusion as in (a) of \ref{omega-geo:1}). In subcase (b3), the intersection $U_0 = M_2\cap M'_2$ is
$\KK_0$-rational by Lemma~\ref{rational An 2} and has dimension $\dim(U_0) = n-2k$ for a positive integer $k < n/2$, since $M_2$ and $M_2'$ belong to the same class $\fS^-$. Hence $\dim(U_0) \leq n-2$. Moreover, $U_0\subset M_1$, since both $M_2$ and $M_2'$ are incident with $M_1$ in $D_n(\KK)$. Clearly, $p \subseteq  U_0$. Again, the conclusion follows as in (a) of \ref{omega-geo:1}).
\end{enumerate}
\item\label{omega-geo:4} $\Gamma(\KK) = \Gr_{+,-}(D_n(\KK))$. Assume firstly that $n = 3$. We have discussed this case in \cite[Theorem 5.10]{ILP19} but we turn back to it here, using an argument different from that of \cite{ILP19}.

By the Klein correspondence,  $V_{2n}(\KK) = V_6(\KK)$ can be regarded as the exterior square of $V_4(\KK)$, with the basis $E = (e_1,\dots, e_6)$ of $V_6(\KK)$, to be chosen as in Section~\ref{rational2}, realized as the exterior square $E = E'\wedge E'$ of a suitable basis $E'$ of $V_4(\KK)$. The elements of $D_3(\KK)$ of type $+$ or $-$ correspond to $1$- and $3$-dimensional subspaces of $V_4(\KK)$ and the $1$-elements of $D_3(\KK)$ correspond to $2$-subspaces of $V_4(\KK)$. By Lemma~\ref{exterior}, an element of $D_3(\KK)$ is $\KK_0$-rational with respect to $E$ if and only if the subspace corresponding to it in $V_4(\KK)$ is $\KK_0$-rational with respect to $E'$. Accordingly, a $(+,-)$--flag of $D_3(\KK)$ is nearly $\KK_0$-rational if and only if the corresponding $(1,3)$-flag of $A_3(\KK)$ is nearly $\KK_0$-rational. Thus, we are driven back to the special case $\Gr_{1,3}(A_3(\KK))$ of $\Gr_{1,n}(A_n(\KK))$, already discussed in \ref{omega-geo:1}) of this proof. It follows that $\Omega_{\KK_0}(\Gamma(\KK))$ is a subspace of $\Gamma(\KK)$, as claimed.

Consider now $n > 3$. Let $F = (M_1,M_2)$ and $F' = (M'_1, M'_2)$ be two distinct nearly $\KK_0$-rational collinear points of $\Gamma(\KK)$. As $F$ and $F'$ are collinear, either $M_1 = M'_1$ or $M_2 = M'_2$. To fix ideas, let $M_2 = M_2'$. Hence
\[\ell_{U,M_2} ~ = ~ \{(M,M_2) \colon M\in\fS^+, M\cap M_2 \supset U\}\]
is the line of $\Gamma(\KK)$ through $F$ and $F'$, where $U = M_1\cap M'_1\subset M_2$, $\dim(U) = n-2$ (see \eqref{+-lines:b}). If $M_2$ is $\KK_0$-rational, there is nothing to prove. Assuming that $M_2$ is not $\KK_0$-rational, there are still a number of subcases to examine:
\begin{enumerate}[(a)]
\item\label{4:a} Both $M_1$ and $M'_1$ are $\KK_0$-rational. Hence $U = M_1\cap M_1'$ is $\KK_0$-rational. Accordingly, every $(+,-)$--flag $(M,M_2)\in \ell_{U,M_2}$ is nearly $\KK_0$-rational.
\item\label{4:b} Neither $M_1$ nor $M'_1$ are $\KK_0$-rational. Hence there exist $\KK_0$-rational $(n-2)$-elements $U_0$ and $U_0'$ such that
  $U_0 \subset M_1\cap M_2$ and $U'_0 \subset M'_1\cap M_2$.

  If $U_0 = U'_0$ then $U_0 = M_1\cap M'_1$. However $M_1\cap M'_1 = U$.
Hence $U = U_0$ is $\KK_0$-rational. In this case we are done: all $(+,-)$--flags incident to $U$ are nearly $\KK_0$-rational.

On the other hand, suppose $U_0 \neq U'_0$. Then $U_0+U'_0$ is a $\KK_0$-rational $(n-1)$-dimensional subspace of $M_2$. Being $\KK_0$-rational, $U_0+U'_0$ is an $(n-1)$-element of $B^+_n(\KK_0)$. As such, it is contained in just two $n$-elements of $B^+_n(\KK_0)$. In other words, both
$n$-elements of $B^+_n(\KK)$ containing $U_0+U'_0$ are $\KK_0$-rational. However $M_2$ is indeed one of those two elements. Therefore $M_2$ is $\KK_0$-rational. This contradicts the assumptions made on $M_2$. Consequently, this case we have now been considering cannot occur.

\item\label{4:c}
  Just one of $M_1$ and $M'_1$ is $\KK_0$-rational. To fix ideas, let $M_1$ be the $\KK_0$-rational one. As $(M'_1, M_2)$ is nearly $\KK_0$-rational by assumption, but neither $M'_1$ nor $M_2$ are $\KK_0$-rational, there exists a $\KK_0$-rational $(n-2)$-element $U'\subset M'_1\cap M_2$.
  If $U' = U$ then $U$ is $\KK_0$-rational and we are done.

  Suppose that $U' \neq U$. Therefore $U' \not\subseteq M_1$, otherwise $U' = M_1\cap M'_1 = U$. As both $M_1$ and $U'$ are $\KK_0$-rational, their intersection $W := M_1\cap U'$ is $\KK_0$-rational. Note that $\dim(W) = n-3$, as one can see by noticing that $M_1\cap U' = (M_1\cap U')\cap U' = (M_1\cap M_2)\cap U'$  and recalling that $M_1\cap M_2$ is a hyperplane of $M_2$.

Consider the orthogonal $W^\perp$ of $W$ with respect to the form $q$ of Section~\ref{models2}. Taking equation~\eqref{forma quadratica} into account and recalling that $W$ is $\KK_0$-rational, we see that $W^\perp$ is a $\KK_0$-rational vector subspace of $V_{2n}(\KK)$. In fact $W^\perp$ is the span in $V_{2n}(\KK)$ of the orthogonal $W_0^\perp \subset V_{2n,E}(\KK_0)$ of
$W_0 := W\cap V_{2n, E}(\KK_0)$ with respect to the form $q_0$ induced by $q$ on $V_{2n,E}(\KK_0)$. All of the spaces $M_1, M_1', U, U'$ and $M_2$
contain $W$ and are totally singular, hence they are contained in $W^\perp$. Moreover, $M_1$ and $U'$ are $\KK_0$-rational.

As $W^\perp$ is $\KK_0$-rational, we can choose a basis $B = (w_1, \dots, w_{n+3})$ of $W^\perp$ formed by $\KK_0$-rational vectors. We can also assume that $w_1,\dots, w_{n-3}$ span $W$. As $B$ consists of $\KK_0$-rational vectors, a vector subspace of $W^\perp$ is $\KK_0$-rational with respect to $B$ if and only if it is $\KK_0$-rational with respect to $E$. Accordingly, $M_1$ and $U'$ are $\KK_0$-rational with respect to $B$ while $M'_1, U$ and $M_2$ are not.

We now switch to the quotient $W^\perp/W$, taking the cosets $\bar{w}_i := w_{n-3+i}+W$ for $i = 1, 2,\dots, 6$ to form a basis $\overline{B}$ of $W^\perp/W$. Since $W$ is totally singular, the form $q$ induces a quadratic form $\bar{q}$ on $W^\perp/W$. Let $\overline{\Delta}$ be the $D_3$-building associated to $\bar{q}$ and let $\overline{\Gamma}$ be its $(+,-)$--grassmannian. So, $(M_1/W, M_2/W)$ and $(M'_1/W, M_2/W)$ are points of the line $\ell_{U/W, M_2/W}$ of $\overline{\Gamma}$, with $U/W \neq U'/W$ and $U'/W\subset M'_1/W\cap M_2/W$. By the above, $M_1/W$ and $U'/W$ are $\KK_0$-rational while $M'_1/W, M_2/W$ and $U/W$ are not. We now switch from the $D_3$-building $\overline{\Delta}$ to the corresponding $A_3$-geometry, with elements of type $+$ and $-$ realized as points and planes of $\mathrm{PG}(V_4(\KK))$. In this new perspective, the above situation looks as follows: we have two distinct points $p$ and $p'$ (corresponding to $M_1/W$ and $M'_1/W$), two distinct lines $L$ and $L'$ (corresponding to $U/W$ and $U'/W)$ and a plane $S$ (corresponding to $M_2/W$). Both $p$ and $p'$ belong to $L$, $p'\in L'$ but $p\not \in L'$. Moreover, $S$ contains both $L$ and $L'$. Hence $S$ is spanned by $p$ and $L'$. However, $M_1/W$ and $U'/W$ are $\KK_0$-rational. Therefore, in view of Lemma~\ref{exterior}, both $p$ and $L'$ are $\KK_0$-rational with respect to a suitable basis $B'$ of $V_4(\KK)$. Hence $S$ is $\KK_0$-rational with respect to $B'$, since it is spanned by $p$ and $L'$. By exploiting Lemma~\ref{exterior} once again, we obtain that $M_2/W$ is $\KK_0$-rational with respect to $\overline{B}$. We have reached a final contradiction, which shows that the case we have been considering cannot occur.
\end{enumerate}
\end{enumerate}
The proof is complete.
\end{proof}

\begin{lemma}
\label{lemma32}
Let $V$ be a vector space over a division ring $\KK$ and $E = (e_1,\dots,e_n)$ a basis of $V$.
Let $\KK_0$ be a proper sub-division ring of $\KK$ and take $\eta\in\KK\setminus\KK_0.$
Suppose $S$ is a subspace of $V$ containing $e_1+e_2 \eta$. If $S$ is $\KK_0$-rational (with respect to $E$) then $e_1,e_2\in S.$
\end{lemma}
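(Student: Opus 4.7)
The plan is to exhibit a direct-sum decomposition of $V$ indexed by a suitable left $\KK_0$-basis of $\KK$ that isolates the coefficients of $1$ and $\eta$, and then to read the vectors $e_1$ and $e_2$ off the expansion of $e_1+e_2\eta$ through any $\KK_0$-rational basis of $S$.

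First I would observe that, since $\eta\in\KK\setminus\KK_0$, the pair $\{1,\eta\}$ is left $\KK_0$-linearly independent in $\KK$: a relation $c_0\cdot 1+c_1\cdot\eta=0$ with $c_0,c_1\in\KK_0$ and $c_1\neq 0$ would force $\eta=-c_1^{-1}c_0\in\KK_0$. Extend $\{1,\eta\}$ to a left $\KK_0$-basis $(u_\alpha)_{\alpha\in A}$ of $\KK$, with $u_0=1$ and $u_1=\eta$, and put $V_0:=V_{n,E}(\KK_0)$, namely the $\KK_0$-span of $E$ inside $V$.

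Next I would establish the direct-sum decomposition
\[
V \;=\; \bigoplus_{\alpha\in A} V_0\cdot u_\alpha.
\]
This is the routine combination of two uniqueness facts: each $v\in V$ admits a unique right-$\KK$ expansion $v=\sum_j e_j t_j$ in the basis $E$, and each coefficient $t_j$ admits a unique left-$\KK_0$ expansion $t_j=\sum_\alpha c_{j\alpha} u_\alpha$. Associativity of the right $\KK$-action on $V$ lets one collect terms as $v=\sum_\alpha w_\alpha u_\alpha$ with $w_\alpha:=\sum_j e_j c_{j\alpha}\in V_0$, and reversing the argument gives uniqueness of the $w_\alpha$. The main technical point, and essentially the only obstacle, is tracking left versus right scalar actions, since $\KK$ need not be commutative; the decomposition itself is then straightforward.

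Finally I would invoke the hypothesis that $S$ is $\KK_0$-rational to pick a basis $v_1,\dots,v_k$ of $S$ with $v_i\in V_0$, and write $e_1+e_2\eta=\sum_i v_i b_i$ for suitable $b_i\in\KK$. Expanding each $b_i=\sum_\alpha c_{i\alpha}u_\alpha$ with $c_{i\alpha}\in\KK_0$ and collecting terms yields
\[
e_1+e_2\eta \;=\; \sum_{\alpha\in A} w_\alpha u_\alpha, \qquad w_\alpha := \sum_i v_i c_{i\alpha} \in V_0\cap S.
\]
The right-hand side equals $e_1\cdot u_0+e_2\cdot u_1$, and uniqueness in the decomposition above forces $w_0=e_1$, $w_1=e_2$, and $w_\alpha=0$ otherwise. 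Each of $e_1$ and $e_2$ is thereby exhibited as an element of $S$, which is the desired conclusion.
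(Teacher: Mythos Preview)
Your proof is correct and takes a genuinely different route from the paper's. The paper argues hands-on: it first uses a dimension count to locate a $\KK_0$-rational vector $e_1+e_2c_2$ (with $c_2\in\KK_0$) inside $S\cap V_0$, then performs a Gaussian reduction on a $\KK_0$-rational basis of $S$ to bring it to column-reduced echelon form, and finally reads off $e_1,e_2\in S$ from the explicit shape of that echelon basis. Your argument is structural instead: you split $V$ along a left $\KK_0$-basis of $\KK$ containing $1$ and $\eta$, and the projections onto the $1$- and $\eta$-components of the expansion $e_1+e_2\eta=\sum_i v_i b_i$ land in $S$ automatically because the $v_i$ lie in $V_0$ and the coefficients $c_{i\alpha}$ lie in $\KK_0$. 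This is shorter, makes the mechanism transparent, and generalises immediately (e.g.\ if $\sum_{i=1}^m e_i\eta_i\in S$ with $\eta_1,\dots,\eta_m$ left $\KK_0$-independent, then all $e_i\in S$). The price you pay is an appeal to a left $\KK_0$-basis of $\KK$, which in general requires Zorn's lemma; the paper's argument stays entirely inside finite-dimensional linear algebra. If you wish to avoid choice, simply run your decomposition over the finite-dimensional left $\KK_0$-subspace of $\KK$ spanned by $1,\eta,b_1,\dots,b_k$ rather than over all of $\KK$.
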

\begin{proof}
Following our conventions, we assume that $V$ is a right vector space. Let $V_0$ be the $\KK_0$-vector space of the $\KK_0$-rational vectors of $V$ (with respect to $E$). In order to avoid any confusion, we denote spans in $V$ by the symbol $\langle \dots \rangle_V$ and spans in $V_0$ by the symbol $\langle\dots \rangle_{V_0}$.

Assuming that $S$ is $\KK_0$-rational, let $(v_1,\dots v_k)$ be a basis of $S$ consisting of $\KK_0$-rational vectors and suppose that $e_1+ e_2 \eta\in S$. Then $\dim(S\cap \langle e_1,e_2\rangle_V)\geq 1.$
Note that the vector space $S_0 := \langle v_1,\dots,v_k\rangle_{V_0} = S\cap V_0$ has the same dimension as $S$. Thus,
since $\dim(S\cap \langle e_1,e_2\rangle_V)\geq 1,$ we also have $\dim(S_0\cap\langle e_1,e_2\rangle_{V_0})\geq 1$ by the well known
Grassmann dimension formula. It follows that there exists a non-zero vector $w\in S_0$ which is a linear combination $w= e_1 c_{1}+ e_2 c_{2}$ with $c_{1},c_{2}\in\KK_0$ and $(c_{1},c_{2})\neq (0,0)$. If either $c_1=0$ or $c_2=0$, then we are done. So, we can assume that $c_1 \neq 0 \neq c_2$.
Without loss of generality,  we can put $c_{1}=1$, so that  $w_1= e_1+ e_2 c_{2}$ with  $c_{2}\in\KK_0.$ Now we claim that there exists  $j_0\in \{1,\dots k\}$ such that ${v}_{j_0} = e_1 a_{1,j_0}+e_2 a_{2,j_0}+\cdots+  e_n a_{n,j_0}$ with $c_2 a_{1,j_0}\neq a_{2,j_0}$. By way of contradiction, suppose that for all $v_j\in \{v_1,\dots, v_k\}$  we have
\[v_j ~ = ~ e_1 a_{1,j}+e_2 a_{2,j}+e_3 a_{3,j}+\dots+e_n a_{n,j}\]
with $c_{2} a_{1,j}= a_{2,j}$, i.e. $(a_{1,j},a_{2,j})=(1,c_2) d_j$  for some $d_j\in \KK.$ This implies that for all vectors $v\in S$ we have
\[v ~= ~ v_1 \lambda_1+v_2 \lambda_2+\dots + v_k \lambda_k ~ = ~ e_1 (\sum_{i=1}^{k}\lambda_i+ \sum_{i=1}^{k}d_i)+e_1 c_{2}(\sum_{i=1}^{k}\lambda_i+ \sum_{i=1}^{k}d_i)+ u\]
with $u\in \langle e_3,\dots, e_n\rangle, \lambda_i, d_i\in \KK$ and $c_{2}\in\KK_0.$ In particular, taking $v=e_1+ e_2 \eta\in S$ we have
$\sum_{i=1}^{k}\lambda_i+ \sum_{i=1}^{k}d_i=1$ and $c_{2} (\sum_{i=1}^{k}\lambda_i+ \sum_{i=1}^{k}d_i)=\eta$, forcing $\eta=c_{2}\in \KK_0$ which is a contradiction. The claim is proved.

Consider the ordered pair $(w, v_{j_0})$. As $w, v_{j_0}\in S_0$, we can complete this pair to an ordered basis $B$ of $S_0$ by choosing $k-2$ suitable vectors from the $k-1$ vectors in $\{v_1,\dots,v_k\}\setminus \{v_{j_0}\}$. Without getting out of $V_0$, we can now apply a full Gaussian reduction to the sequence of vectors of $B$ to obtain another basis $(v'_1,\dots,v'_k)$ of $S_0$ such that the $(n\times k)$-matrix $M$ of the coefficients of the vectors $v'_1,\dots, v'_k$ with respect to $e_1,\dots,e_n$ is in Column Reduced Echelon Form. (Note that, according to our convention to deal with right vector spaces, vectors should be represented as columns.) By construction, the matrix $M$ contains the identity matrix $I_k$ as a minor. Up to a permutation of the vectors $e_3,\dots,e_n$ we can suppose that this minor encompasses the first $k$ rows of the matrix $M$. The remaining $n-k$ rows form an $((n-k)\times k)$-matrix
\[N ~ =  ~ (b_{k+i,j})_{i, j ~= 1}^{n-k,k} \]
with entries $b_{k+i, j}\in \KK_0$. However $e_1+ e_2 \eta\in S = \langle S_0\rangle_V =  \langle v_1', v_2',\dots,v_k'\rangle_V$. Hence there exist $\alpha_1,\dots,\alpha_k\in\KK$ such that $e_1+ e_2 \eta=v_1' \alpha_1+v_2' \alpha_2+\dots+ v_k' \alpha_k$. For every $i=1,\dots, k$ we have $v_i'=e_i+\sum_{j=k+1}^{n}e_j b_{j,i}$. Therefore
\[ e_1+ e_2 \eta=\sum_{i=1}^{k}e_i \alpha_i +e_{k+1} (\sum_{j=1}^{k}b_{k+1,j}\alpha_j)+ e_{k+2} (\sum_{j=1}^{k}b_{k+2.j}\alpha_j)+\dots+e_{n} (\sum_{j=1}^{k}b_{n,j}\alpha_j),\]
which implies $\alpha_1=1, \alpha_2=\eta$, $\alpha_3=\alpha_4=\dots =\alpha_k=0$ and
\[\sum_{j=1}^{k}b_{k+1,j}\alpha_j ~=~ \sum_{j=1}^{k}b_{k+2,j}\alpha_j ~=~\dots ~ =~ \sum_{j=1}^{k}b_{n,j}\alpha_j ~ = ~ 0.\]
It follows that $e_1+e_2\eta=v_1'+v_2'\eta$,  whence $(e_1-v_1')=(v_2'-e_2)\eta$. However,
\[ (e_1-v_1') ~ = ~ \sum_{i=k+1}^n e_i(-b_{i,1}),\quad  (v_2'-e_2) ~ = ~\sum_{i=k+1}^n e_i(b_{i,2}\eta), \]
whence $-b_{i,1}=b_{i,2}\eta$ for all $i\geq k+1$. Since $b_{i,j}\in\KK_0$ for all $i,j$ and the elements $1,\eta\in\KK$ are linearly independent
over $\KK_0$, it follows that $b_{i,1}=b_{i,2}=0$ for all $i\geq k+1$. So, $v_1'=e_1$ and $v_2'=e_2$. Since $v_1',v_2'\in S$, we obtain $e_1,e_2\in S$, which proves the lemma.
 \end{proof}

\begin{lemma}
  \label{Omega-proper}
If $\Gamma(\KK)$ is as in the hypotheses of Theorem {\rm \ref{m-th3}} then not all points of $\Gamma(\KK)$ belong to  $\Omega_{\KK_0}(\Gamma(\KK))$.
\end{lemma}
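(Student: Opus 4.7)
The plan is to exhibit, for each of the four geometries named in the statement, an explicit flag that is not nearly $\KK_0$-rational. Fix $\eta\in\KK\setminus\KK_0$. The key algebraic input will be Lemma~\ref{lemma32}: any $\KK_0$-rational subspace of the ambient vector space containing $e_i+e_j\eta$ must contain both $e_i$ and $e_j$.

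For $\Gr_{1,n}(A_n(\KK))$ the types splitting $J=\{1,n\}$ are $\{2,\dots,n-1\}$, so I would take $p:=\langle e_1+e_2\eta\rangle$ and $H:=\langle e_1+e_2\eta, e_3,\dots,e_{n+1}\rangle$. A coefficient comparison in the basis $E$ gives $e_1\notin H$; in particular $H$ is not $\KK_0$-rational, and $p$ is plainly not $\KK_0$-rational either. Any putative $\KK_0$-rational intermediate subspace $U$ with $p\subset U\subset H$ would contain $e_1$ by Lemma~\ref{lemma32}, contradicting $e_1\notin H$.

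For the $D_n$-cases I introduce $v_1:=e_1+e_3\eta$ and $v_2:=e_2-e_4\eta^{-1}$; a direct computation using the standard form $q(x)=x_1x_2+\cdots+x_{2n-1}x_{2n}$ shows that both vectors are singular and mutually orthogonal. For $\Gr_{1,-}(D_n(\KK))$ (splitting types $\{2,\dots,n-2\}$) the candidate flag is $(p,M)$ with $p:=\langle v_1\rangle$ and $M:=\langle v_1,v_2,e_5,e_7,\dots,e_{2n-1}\rangle$: this $M$ is totally singular of dimension $n$, the intersection $M\cap\langle e_1,e_3,\dots,e_{2n-1}\rangle=\langle v_1,e_5,\dots,e_{2n-1}\rangle$ has dimension $n-1$, placing $M$ in $\fS^-$, and the fact $e_1\notin M$ combined with Lemma~\ref{lemma32} rules out any $\KK_0$-rational intermediate totally singular subspace between $p$ and $M$. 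For $\Gr_{1,+,-}(D_n(\KK))$ (with $n\geq 4$) and $\Gr_{+,-}(D_n(\KK))$ (with $n\geq 3$) I consider the totally singular $(n-1)$-space $N:=\langle v_1,v_2,e_5,e_7,\dots,e_{2n-3}\rangle$ (collapsing to $\langle v_1,v_2\rangle$ when $n=3$), and define $M_1:=N+\langle e_{2n-1}\rangle$, $M_2:=N+\langle e_{2n}\rangle$; a quotient computation in $N^\perp/N$ shows that these are the only two maximal totally singular subspaces containing $N$ and, since $\dim(M_1\cap M_2)=n-1$, they sit in opposite families. For $\Gr_{1,+,-}$ the candidate flag is $(p,M_1,M_2)$ with $p=\langle v_1\rangle$ and the argument proceeds via Lemma~\ref{lemma32} exactly as before, the splitting types $\{2,\dots,n-2\}$ being covered uniformly. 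For $\Gr_{+,-}$, whose only splitting type is $n-2$, I must exclude a $\KK_0$-rational $(n-2)$-subspace of $M_1\cap M_2=N$; a direct computation shows that every $\KK_0$-rational vector of $N$ lies in the $\KK_0$-span of $e_5,\dots,e_{2n-3}$ (any nonzero $v_1$- or $v_2$-coefficient forces $\eta\in\KK_0$), of $\KK_0$-dimension $n-3$, strictly below the threshold $n-2$.

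The main technical obstacle will be the $\Gr_{+,-}$ case: without a reference point $p$ to anchor Lemma~\ref{lemma32}, the absence of a $\KK_0$-rational $(n-2)$-subspace of $N$ has to be verified by a direct computation of $N\cap V_{2n,E}(\KK_0)$. A secondary but necessary check across all $D_n$ constructions is that the two maximal totally singular subspaces $M_1,M_2$ fall into the correct pair of families $\fS^+,\fS^-$, which is handled by the parity of $\dim(M_1\cap M_2)=n-1$ viewed inside each $M_i$.
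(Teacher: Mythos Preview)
Your argument is correct. For $\Gr_{1,n}(A_n(\KK))$, $\Gr_{1,-}(D_n(\KK))$ and $\Gr_{1,+,-}(D_n(\KK))$ your constructions coincide with the paper's (your $v_2=e_2-e_4\eta^{-1}$ is proportional to the paper's $e_2\eta-e_4$; the paper's displayed $M_1=M_2$ in the $\{1,+,-\}$ case is an evident misprint, and your $M_2=N+\langle e_{2n}\rangle$ is the natural repair). The genuine divergence is in $\Gr_{+,-}(D_n(\KK))$. There the paper shifts the irrationality to coordinates $5$--$8$, taking $M_1=\langle e_1+e_3,\,e_2-e_4,\,e_5+e_7\eta,\,e_6\eta-e_8,\,e_{10},\dots,e_{2n}\rangle$ and $M_2=\langle e_1+e_4,\,e_2-e_3,\,e_5+e_7\eta,\,e_6\eta-e_8,\,e_{10},\dots,e_{2n}\rangle$, so that $M_1\cap M_2$ contains the $\KK_0$-rational vector $e_1-e_2+e_3+e_4$ yet every $\KK_0$-rational subspace of $M_1\cap M_2$ sits inside an $(n-3)$-dimensional space; the case $n=3$ is handled separately via the Klein correspondence and Lemma~\ref{exterior}. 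You instead keep the irrationality in coordinates $1$--$4$ and reuse the same pair $(M_1,M_2)$ already built for $\Gr_{1,+,-}$, observing that $N\cap V_{2n,E}(\KK_0)=\langle e_5,e_7,\dots,e_{2n-3}\rangle_{\KK_0}$ has dimension $n-3$. Your route is more economical: it treats all $n\geq 3$ uniformly, avoids Lemma~\ref{exterior} and the Klein correspondence entirely, and recycles a single pair $v_1,v_2$ across all three $D_n$ geometries. The paper's construction, by contrast, makes $M_1\cap M_2$ contain a nonzero $\KK_0$-rational vector, which is not needed here but illustrates that near-$\KK_0$-rationality can fail even when some rational content is present.
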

\begin{proof}
 We first consider the case $\Gamma(\KK)=\Gr_{1,n}(A_n(\KK))$. Pick $\eta\in \KK\setminus \KK_0$. With $E = (e_1,\dots, e_{n+1})$ as in Section \ref{rational1}, put $p = \langle e_1+ e_2\eta\rangle$  and $H=\langle e_1+e_2\eta, e_3,\dots,e_{n},e_{n+1}\rangle$. (Needless to say, the symbol $\langle\dots \rangle$ refers to spans in $V_{n+1}(\KK)$.) The flag  $(p, H)$ is a point of $\Gr_{1,n}(A_n(\KK))$. Let $S$ be a subspace of $V_{n+1}(\KK)$ such that $p\subseteq S\subseteq H.$ Any such subspace contains the vector $e_1+e_2\eta$ but neither $e_1$ nor $e_2$.
Hence $S$ cannot be $\KK_0$-rational, by Lemma~\ref{lemma32}. Consequently, $(p, H)\not\in \Omega_{\KK_0}(\Gamma(\KK)).$

The case $\Gamma(\KK)=\Gr_{{1,+,-}}(D_n(\KK))$ is entirely analogous. With $E = (e_1, e_2,\dots, e_{2n})$ as in Section \ref{rational2} and $\eta$ as above, put $p=\langle e_1+e_3\eta\rangle$, $M_1=\langle e_1+e_3\eta,e_2\eta-e_4,e_5,e_7,\dots , e_{2n-1}\rangle$ and $M_2=\langle e_1+e_3\eta,e_2\eta- e_4,e_5,e_7,\dots, e_{2n-1}\rangle$. Taking equation~\eqref{forma quadratica} into account, it is straightforward to see that $p, M_1$ and $M_2$ belong to $D_n(\KK)$. It is also easy to see that they form a $(1, +, -)$--flag of $D_n(\KK)$, namely a point of $\Gamma(\KK)$. Clearly, none of $p$, $M_1$ or $M_2$ is $\KK_0$-rational and Lemma \ref{lemma32} implies that none of the subspaces contained in $M_1\cap M_2$ and containing $p$ can be $\KK_0$-rational. Hence $(p, M_1, M_2)\not\in \Omega_{\KK_0}(\Gamma(\KK)).$

When $\Gamma(\KK)=\Gr_{{1,-}}(D_n(\KK))$ we can consider the flag $(p,M)$ where $p=\langle e_1+e_3\eta\rangle$ and  $M=\langle e_1+e_3\eta, e_2\eta-e_4, e_5, e_7, \dots, e_{2n-1}\rangle$. The subspace $M$ is $n$-dimensional and totally singular for $q$. We can also assume to have chosen the signs $+$ and $-$ in such a way that $\fS^-$ is indeed the class which $M$ belongs to. So, $(p,M)$ is a point of $\Gamma(\KK)$. Once again, by Lemma \ref{lemma32} we see that $(p,M)\not\in \Omega_{\KK_0}(\Gamma(\KK)).$

Finally, let $\Gamma(\KK) = \Gr_{+,-}(D_n(\KK))$. In view of Lemma~\ref{exterior}, if $n = 3$ we are back to $A_3$. So, assume $n > 3$. With $\eta\in \KK\setminus \KK_0$ and $E = (e_1,\dots, e_{2n})$ as in Section \ref{rational2}, put
\[\begin{array}{ccl}
M_1 & := & \langle e_1+e_3, e_2 - e_4, e_5+e_7\eta, e_6\eta - e_8, e_{10}, e_{12},\dots, e_{2n}\rangle, \\
 M_2 & := & \langle e_1+ e_4, e_2 - e_3, e_5+e_7\eta, e_6\eta - e_8, e_{10}, e_{12},\dots, e_{2n}\rangle.
\end{array}\]
Then $M_1$ and $M_2$ are $n$-dimensional totally singular subspaces of $D_n(\KK)$ but neither of them is $\KK_0$-rational. Moreover $M_1\cap M_2 = \langle e_1-e_2 + e_3 + e_4, e_5+e_7\eta, e_6\eta - e_8, e_{10}, e_{12},\dots, e_{2n}\rangle$.
Hence $\{M_1, M_2\}$ is a $\{+,-\}$--flag of $D_n(\KK)$, necessarily not $\KK_0$-rational, since neither $M_1$ nor $M_2$ is $\KK_0$-rational. Accordingly, $M_1\cap M_2$ is not $\KK_0$-rational. In fact all $\KK_0$-rational subspaces of $M_1\cap M_2$ are contained in $\langle e_1-e_2+e_3+ e_4, e_{10}, e_{12},\dots, e_{2n}\rangle$, which is $(n-3)$-dimensional. Their dimensions are too small for them to split $(+,-)$. Therefore $(M_1, M_2)\not\in \Omega_{\KK_0}(\Gamma(\KK)).$
\end{proof}
Lemmas \ref{Omega-geo} and \ref{Omega-proper} yield Theorem \ref{m-th3}.

\subsection{Proof of Corollary \ref{vecchio}}\label{subtle}
As already remarked in Section \ref{models2}, the function $\iota$ that maps every $(n-1)$-element of $B^+_n(\KK)$ onto the pair of $n$-elements containing it is an isomorphism from $\Gr_{n-1}(B^+_n(\KK))$ to $\Gr_{+,-}(D_n(\KK))$. We know from Theorem~\ref{m-th1} that if $\KK_0 < \KK$ then $\Gr_{+,-}(D_n(\KK_0))$ spans a proper subspace of $\Gr_{+,-}(D_n(\KK))$. In order to show that the same holds for $\Gr_{n-1}(B^+_n(\KK_0))$ and $\Gr_{n-1}(B^+_n(\KK))$, as claimed in Corollary \ref{vecchio}, we only need to prove the following:

\begin{proposition}
The isomorphism $\iota$ maps the subgeometry $\Gr_{n-1}(B_n^+(\KK_0))$ of $\Gr_{n-1}(B^+_n(\KK))$ onto the subgeometry $\Gr_{+,-}(D_n(\KK_0))$ of $\Gr_{+,-}(D_n(\KK))$.
\end{proposition}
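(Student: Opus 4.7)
The mapping $\iota$ is already known to be an isomorphism from $\Gr_{n-1}(B_n^+(\KK))$ onto $\Gr_{+,-}(D_n(\KK))$. The content of the proposition is therefore that $\iota$ restricts to a bijection between the $\KK_0$-rational subgeometries, both on points and on lines. I will verify this for points first and then for lines.

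For points, the easy direction is that a $\{+,-\}$-flag $(M_1,M_2)$ of $D_n(\KK)$ with both $M_1$ and $M_2$ $\KK_0$-rational has a $\KK_0$-rational intersection $X=M_1\cap M_2$: this is immediate from Corollary \ref{rational An 2}, and $X$ is then an $(n-1)$-dimensional totally singular $\KK_0$-rational subspace, hence a point of $\Gr_{n-1}(B_n^+(\KK_0))$. The non-trivial direction is the converse: if $X$ is a $\KK_0$-rational $(n-1)$-element of $B_n^+(\KK)$, then both $n$-elements of $B_n^+(\KK)$ containing $X$ are $\KK_0$-rational. Here I would argue exactly as in Case 3(a) of the proof of Lemma \ref{Omega-geo}: by Section \ref{rational2}, $X$ is an $(n-1)$-element of the subgeometry $B_{n,E}^+(\KK_0)=B_n^+(\KK_0)$, so it lies in exactly two $n$-elements $N_1,N_2$ of $B_n^+(\KK_0)$; these are totally singular $n$-dimensional subspaces of $V_{2n}(\KK)$, hence $n$-elements of $B_n^+(\KK)$ containing $X$; but $X$ is contained in only two such elements of $B_n^+(\KK)$, namely $M_1$ and $M_2$, so $\{M_1,M_2\}=\{N_1,N_2\}$, both $\KK_0$-rational. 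This shows that $\iota$ maps the points of $\Gr_{n-1}(B_n^+(\KK_0))$ bijectively onto the points of $\Gr_{+,-}(D_n(\KK_0))$.

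For lines, recall that the lines of $\Gr_{n-1}(B_n^+(\KK))$ are the flags $(X,M)$ with $\dim(X)=n-2$, $M$ an $n$-element and $X\subset M$; under $\iota$ such a flag is sent to the line $\ell_{X,M}$ of $\Gr_{+,-}(D_n(\KK))$ of type $(n-2,+)$ or $(n-2,-)$ according to which of $\fS^+$, $\fS^-$ contains $M$. In both geometries the line is $\KK_0$-rational precisely when both elements $X$ and $M$ of the defining flag are $\KK_0$-rational, because both $\Gr_{n-1}(B_n^+(\KK_0))$ and $\Gr_{+,-}(D_n(\KK_0))$ are defined, by Sections \ref{rational1}--\ref{rational2}, as the subgeometries of $\KK_0$-rational points and lines. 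Hence $\iota$ sends lines of $\Gr_{n-1}(B_n^+(\KK_0))$ bijectively onto lines of $\Gr_{+,-}(D_n(\KK_0))$.

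Combining the point and line statements, $\iota$ restricts to an isomorphism from $\Gr_{n-1}(B_n^+(\KK_0))$ onto $\Gr_{+,-}(D_n(\KK_0))$, as desired. The only step requiring genuine input is the forward direction on points, and that step is precisely the one already used inside the proof of Lemma \ref{Omega-geo}; everything else is a direct application of Corollary \ref{rational An 2} together with the description of the rational subgeometries in Section \ref{rational2}.
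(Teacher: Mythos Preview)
Your proof is correct and follows essentially the same approach as the paper's own proof: both directions on points are handled exactly as in the paper (Corollary~\ref{rational An 2} for one direction, and the ``only two $n$-elements through a given $(n-1)$-element'' argument for the other). You are in fact slightly more thorough than the paper, which treats only points explicitly; your additional paragraph on lines is correct but essentially automatic, since $\iota$ sends the line $\ell_{X,M}$ to the line with the very same defining flag $(X,M)$, so $\KK_0$-rationality of lines is preserved tautologically.
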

\begin{proof}
It goes without saying that both $\Gr_{n-1}(B^+_n(\KK_0)) = \Gr_{n-1,E}(B^+_n(\KK_0))$ and $\Gr_{+,-}(D_n(\KK_0)) = \Gr_{+,-,E}(D_n(\KK_0))$ for the same basis $E$ of $V_{2n}(\KK)$, chosen as in Section \ref{rational2}.

Let $U = M_1\cap M_2$ for a $(+,-)$--flag $(M_1,M_2)$ of $D_n(\KK)$. If both $M_1$ and $M_2$ are $\KK_0$-rational then $U$ is $\KK_0$-rational,
by Lemma~\ref{rational An 2}. Conversely, let $U$ be $\KK_0$-rational. Let $M'_1$ and $M'_2$ be the two $n$-elements of $B^+_{n, E}(\KK_0)$ containing $U$. Then $M'_1$ and $M'_2$ are $\KK_0$-rational, as they belong to $B^+_{n,E}(\KK_0)$. However, they are the only two $n$-elements on $U$. Hence $\{M'_1, M'_2\} = \{M_1, M_2\}$. Accordingly, $M_1$ and $M_2$ are $\KK_0$-rational.
\end{proof}

\subsection{Proof of Theorem \ref{non-interval}}\label{Sec 3 bis}
We are not going to give a detailed proof of this theorem. We will only offer a sketch of it, leaving the details to reader.

As stated since the beginning of this section, $\KK_0$ is a proper sub-division ring of $\KK$ and $\Gamma(\KK) = \Gr_J(X_n(\KK))$, where $X_n$ stands for $A_n$ or $D_n$. According to the hypotheses of Theorem \ref{non-interval}, we assume that $J$ is not connected.

Suppose firstly that $J$ contains two types $j_1$ and $j_2$, with $j_1, j_2\leq n-2$ when $X_n = D_n$, such that $j_1+1 < j_2$ and $i\not\in J$ for every type $i \in \{j_1+1, j_1+2,\dots, j_2-1\}$. We say that a $J$-flag $F$ of $X_n(\KK)$ (point of $\Gamma(\KK)$) is {\em nearly} $\KK_0$-{\em rational at} $(j_1, j_2)$ if there exists a $\KK_0$-rational element $X$ of $X_n(\KK)$ incident to $F$ and such that $j_1 \leq \dim(X)\leq j_2$. Let $\Omega_{\KK_0,j_1,j_2}(\Gamma(\KK))$ be the set of $J$-flags which are nearly $\KK_0$-rational at $(j_1, j_2)$. Using the same argument as in \ref{omega-geo:1}) in the proof of Lemma~\ref{Omega-geo}, with the roles of $1$ and $n$ respectively taken by $j_1$ and $j_2$ we see that
$\Omega_{\KK_0,j_1,j_2}(\Gamma(\KK))$ is a subspace of $\Gamma(\KK)$. Next, by an argument similar to that used for $\Gr_{1,n}(A_n(\KK))$ in the proof of Lemma~\ref{Omega-proper}, we obtain that $\Omega_{\KK_0,j_1, j_2}(\Gamma(\KK)) \neq \Gamma(\KK)$, namely $\Omega_{\KK_0,j_1, j_2}(\Gamma(\KK))$ is a proper subspace of $\Gamma(\KK)$. However $\Gamma(\KK_0) := \Gr_J(X_n(\KK_0))$ is contained in $\Omega_{\KK_0,j_1, j_2}(\Gamma(\KK))$. Hence $\Gamma(\KK_0)$ spans a proper subspace of $\Gamma(\KK)$, as stated in Theorem~\ref{non-interval}.

Two more possibilities remain to examine, which are not considered in
Theorem~\ref{m-th1}, namely $X_n(\KK)=D_n(\KK)$ and $J$ as follows:
\begin{enumerate}
\item
$J = \{j, j+1,\dots, j+k\} \cup \{+,-\}$ for $j \geq 1$, $j+k < n-2$ and either $j > 1$ or $k > 0$. In this case we can use the same arguments as for $J = \{1, +, -\}$ in the proof of Theorem \ref{m-th1}, with $j+k$ playing the role of $1$.
\item
$J = \{j, j+1,\dots, j+k\} \cup \{-\}$ or $J = \{j, j+1,\dots, j+k\} \cup \{+\}$, for $j \geq 1$, $j+k < n-2$ and either $j > 1$ or $k > 0$. The arguments used for $J = \{1, -\}$ work for this case as well, with $1$ replaced by $j+k$.
\end{enumerate}

\section{Proof of Lemma \ref{cor-inf} and Theorem \ref{m-th2}}\label{Nuovo}

\subsection{Proof of Lemma~\ref{cor-inf}}

Assume that $J$ is non-connected and $\KK$ is not finitely generated. Let $S$ be a finite set of points of $\Gamma(\KK) = \Gr_J(X_n(\KK))$, where $X_n$ stands for $A_n$ or $D_n$. Each element $F$ of $S$ is a $J$-flag $F = \{U_1,U_2,\dots, U_t\}$ of vector subspaces $U_i$ of $V_N(\KK)$, where $t := |J|$ and $N$ is $n+1$ or $2n$ according as $X_n$ is $A_n$ or $D_n$. Fix a basis $B_{i,F}$ for each of the  vector subspaces $U_i \in F$ and each $F\in S$ and let $C(S)$ be the set of all the coordinates of the vectors of $\cup_{F\in S}\cup_{i=1}^t B_{i,F}$ with respect to a given basis of $V_N(\KK)$
(chosen as in Section~\ref{rational2} when $X_n = D_n$).

Since $S$ is finite, $C(S)$ is finite as well; in fact $|C(S)| \leq t\cdot N\cdot |S|$. Therefore, and since $\KK$ is not finitely generated, $C(S)$ generates a proper sub-division ring $\KK_0$ of $\KK$. Then $\Gamma(\KK_0) := \Gr_J(X_n(\KK_0))$ spans a proper subspace of $\Gamma(\KK)$, by Theorem \ref{non-interval}. Obviously, $S$ is contained in $\Gamma(\KK_0)$. Hence $S$ spans a proper subspace of $\Gamma(\KK)$. Thus we have proved that no finite subset of $\Gamma(\KK)$ generates $\Gamma(\KK)$, as claimed in Lemma \ref{cor-inf}.

\subsection{Proof of Theorem \ref{m-th2}}

Put $\Gamma := \Gr_{1,n}(A_n(\overline{\mathbb{F}}_p))$. We have $\mathrm{gr}(\Gamma)=\infty$ by Lemma~\ref{cor-inf}, since $\overline{\FF}_p$ is not finitely generated. The geometry $\Gamma$ admits a (full) projective embedding of dimension $(n+1)^2-1$, namely the embedding $e_{\mathrm{Lie}}$ mentioned in Remark \ref{Rem1.6}. Therefore $\mathrm{er}(\Gamma)\geq (n+1)^2-1$.

By way of contradiction, suppose that $\mathrm{er}(\Gamma) > (n+1)^2$. Then $\Gamma$ admits a (full) projective embedding $e:\Gamma\rightarrow \mathrm{PG}(V)$ of dimension $\dim(e) \geq (n+1)^2+1$. Consequently, there exists a set $S$ of $(n+1)^2+1$ points of $\Gamma$ such that $\cup_{x\in S}e(x)\subset V$ spans a subspace $V_S$ of $V$ of dimension $\dim(V_S) = (n+1)^2+1$.

Every point $x\in S$ is a point-hyperplane flag $(p_x, H_x)$ of $A_n(\overline{\FF}_p)$. For every $x\in S$ we choose a non-zero vector $v_x\in p_x$ and a basis $B_x$ of $H_x$. Chosen a basis $E$ of $V_{n+1}(\overline{\FF}_p)$, let $C(S)$ be the set of all elements of $\overline{\FF}_p$ which occur as coordinates (with respect to $E$) of either $v_x$ or a vector of $B_x$, for $x\in S$. The set $C(S)$ is finite. Hence it generates a finite subfield $\LL$ of $\overline{\FF}_p$. Every point $x\in S$ is obviously $\LL$-rational. Therefore $S \subset \Gamma_\LL := \Gr_{1,n}(A_n(\LL)) \subset \Gamma$.

Let $V_\LL$ be the subspace of $V$ corresponding to the span of $e(\Gamma_\LL)$. Clearly $V_\LL \supseteq V_S$. Hence $\dim(V_\LL) \geq \dim(V_S) = (n+1)^2+1$. The restriction $e_\LL$ of $e$ to $\Gamma_\LL$ is a lax embedding of $\Gamma_\LL$ in $\mathrm{PG}(V_\LL)$. As noticed in Remark \ref{lax}, inequality \eqref{gen emb rk 1} holds for lax embeddings too. Therefore $\Gamma_\LL$ has generating rank $\gr(\Gamma_\LL) \geq \dim(e_\LL) = \dim(V_\LL) > (n+1)^2$.

On the other hand, the field $\LL$ is a simple extension of the prime field $\FF_p$ and $\Gr_{1,n}(A_n(\FF_p))$ has generating rank equal to $(n+1)^2-1$, by Cooperstein \cite{C98b}. Therefore $\mathrm{gr}(\Gamma_\LL)\leq (n+1)^2$ by Blok and Pasini \cite[Corollary 4.8]{BPa01}. We have reached a contradiction. Consequently, $\mathrm{er}(\Gamma) \leq (n+1)^2$. The proof of Theorem \ref{m-th2} is complete.

\bigskip

\section*{References}

 \end{document}